\newtheorem{prop}{Proposition}[section]
\newtheorem{thm}[prop]{Theorem}
\newtheorem{corollario}[prop]{Corollary}
\newtheorem{osserva}[prop]{Remark}
\theoremstyle{remark}
\def\be{\begin{equation}}
\def\ee{\end{equation}}
\def\dotminus{\mathbin{\ooalign{\hss\raise1ex\hbox{.}\hss\cr
  \mathsurround=0pt$-$}}}
\begin{document}
\title{The model theory of residue rings of models of Peano Arithmetic: The prime power case}
\author{P. D'Aquino\footnote{Dipartimento di Matematica e Fisica, Universit\`{a} della Campania ``L. Vanvitelli'', viale Lincoln, 5, 81100 Caserta,  Italy,   e-mail: paola.daquino@unicampania.it } \ and  
A. Macintyre\footnote{Partially supported by a Leverhulme Emeritus Fellowship.
\newline School of Mathematics, University of Edinburgh, King's Buidings,  Peter Guthrie Tait Road
Edinburgh EH9 3FD, UK, e-mail: a.macintyre@qmul.ac.uk \newline Keywords: models of Peano Arithmetic, ultraproducts, local rings, valuations and generalizations} 
}
\maketitle

\begin{abstract}
In \cite{MacResField} the second author gave a systematic analysis of definability and decidability for rings $\mathcal M/p\mathcal M$, where $\mathcal M$ is a model of Peano Arithmetic and $p$ is a prime in $\mathcal M$. In the present paper we extend those results to the more difficult case of $\mathcal M/p^k\mathcal M$, where $\mathcal M$ is a model of Peano Arithmetic, $p$ is a prime in $\mathcal M$, and $k>1$. In \cite{MacResField} work of Ax on finite fields was used, here we use in addition work of Ax on ultraproduct of $p$-adics.

\end{abstract}

\section{Introduction} 

\subsection{} Boris Zilber asked the second author the following question:

If $\mathcal M$ is a nonstandard model of full Arithmetic, and $n$ is a nonstandard element of $\mathcal M$ congruent to $1$ modulo all  standard integers $m$, does the ring $\mathcal M/n\mathcal M$ interpret Arithmetic?

The problem arose in work of Zilber on commutator relations in quantum mechanics, see \cite{BorisComm}.  

Although until recently little attention had been given to rings $\mathcal M/n\mathcal M$, one would naturally expect that $\mathcal M/n\mathcal M$ is much simpler than $\mathcal M$, for example being {\it internally finite in $\mathcal M$}, and thus $\mathcal M$ should not be interpretable  in $\mathcal M/n\mathcal M$, so Zilber's Problem should have a negative answer. 

This is in fact how things turn out. From the outset of our work we looked at much more general problems connected to definability in the rings $\mathcal M/n\mathcal M$, to emphasize the vast gulf between the residue rings $\mathcal M/n\mathcal M$ and $\mathcal M$. We chose to work with models $\mathcal M$ of Peano Arithmetic, where one inevitably encounters G\"odelian phenomena, and contrast $\mathcal M$ with the ring $\mathcal M/n\mathcal M$ where one is in  a strongly non-G\"odelian setting, mainly because of Ax's work from \cite{ax4}, already exploited by Macintyre in \cite{MacResField} to give a detailed analysis of the case when $n$ is prime. 

After a lecture of Macintyre in 2015 Tom Scanlon pointed out that mere non-interpretability can easily be obtained via the interpretation of $\mathcal M/n\mathcal M$ as a nonstandard initial segment $[0,n-1]$, using definable pigeon-hole principle imposed by $\mathcal M$ on $\mathcal M/n\mathcal M$. This is certainly the first completely clear solution to Zilber's Problem. But it seems to have no implications for getting close to exact bounds for complexity of definitions in the $\mathcal M/n\mathcal M$.

\subsection{} Our work on the $\mathcal M/n\mathcal M$ has three clearly defined stages, of increasing difficulty. 

\

\noindent {\bf Stage 1.} $n$ prime. Then if $n$ is a standard prime $p$, $\mathcal M/n\mathcal M \cong\mathbb F_p$, while if $n$ is a nonstandard prime then $\mathcal M/n\mathcal M$ is a pseudofinite field (in Ax's sense \cite{ax4}) of characteristic $0$.  See \cite{MacResField}  for this case, and the cited properties. Pseudofinite fields of characteristic $0$ are models of the theory of all finite prime fields, and this theory $PrimeFin$ has a nice set of first-order axioms which we now sketch briefly.

 A field $K$ is a model of   $PrimeFin$ if 
 
 \smallskip

$1)$ $K$ is perfect

$2)$ $K$ has exactly one extension of each finite degree $n$  

$3_d)$ if $\Gamma$ is an absolutely irreducible plane curve of degree $d$ and $|K|>(d-1)^4$ then $\Gamma$  has a point in $K$

$4_d)$ if $|K|\leq (d-1)^4$ then $|K|=p$ for some prime $p\leq (d-1)^4$ 

\noindent 
for each $0<d\in \mathbb N$.

\smallskip

The theory is decidable, and all models are pseudofinite, in the sense weaker than Ax's, namely that each model is elementarily equivalent to an ultraproduct, possibly principal, of finite fields. Finally, each model of the theory is elementarily equivalent to some $\mathcal M/n\mathcal M$, where $\mathcal M\equiv \mathbb Z$.

Noninterpretability of $\mathcal M$ in any model of the theory can be seen via Hrushovski's result that pseudofinite fields are simple see \cite{udi}. 

\

\noindent {\bf Stage 2.} $n=p^k$, $p$ prime, and $k>1$.  This is covered in the present paper, and is indispensable for Stage 3.

\

\noindent {\bf Stage 3.}  $n$ arbitrary. This will be considered in the last paper of the series. We use the factorization theory in $\mathcal M$ of $n$ as {\it an internal finite product} of prime powers, and an internal representation of $\mathcal M/n\mathcal M$ as $\prod_{p|n} \mathcal M/p^{v_p(n)}\mathcal M$ (see \cite{daqCheb}, \cite{BeraDaq}). This gets combined with an {\it internalized Feferman-Vaught Theorem} (see \cite{FV}), and the work of Stage 2, to get analogues of the main results of Stage 1 and 2, and thereby give a thorough analysis in all cases of the definability theory and axiomatizability of the $\mathcal M/n\mathcal M$, with spin-offs on pseudofiniteness. 

It is notable in Stage 2 that the $\mathcal M/p^k\mathcal M$ are Henselian local rings, and models of a natural set of axioms involving TOAGS (see \cite{DDM}), truncated ordered abelian groups. 

We make heavy use of many classical results on Henselian valuation rings, and {\it truncate } them to get results for our $\mathcal M/p^k\mathcal M$. This is not routine. Though we know a huge amount about the logic of Henselian fields we know rather little about the logic of general Henselian local rings (even familiar finite local rings \cite{mints}). 


\section{Algebraic properties of $\mathcal M/p^k\mathcal M$ }

\subsection{$\mathcal M$ standard}
Already for $\mathcal M$ standard there are nontrivial issues of decidability (and definability). 
Let  $n=p^k$, where $p$ is a prime and $k\geq 1$. It is well-known that 
$\mathbb Z/p^k\mathbb Z$ is an henselian local ring  \cite{Nagata}.  The ideals in the ring of $p$-adic integers $\mathbb Z_p$ are generated by powers of $p$. Moreover, $\mathbb Z/p^k\mathbb Z $ is isomorphic in a natural way to $\mathbb Z_p/p^k\mathbb Z_p $. 
For any fixed prime $p$ there is an existential formula in the language of rings  defining  $\mathbb Z_p$ in $\mathbb Q_p$.
 A uniform definition of $\mathbb Z_p$  in $\mathbb Q_p$ without reference to the prime needs an existential-universal formula, see \cite{CDLM}. 
Ax in \cite{ax4} proved that the theory of the class of valued fields $\mathbb Q_p$ as $p$ varies among the primes is decidable.  So,  the theory of the class of all $\mathbb Z/p^k\mathbb Z$ as $p$ varies over all primes and   $k$ over all positive integers, is also decidable. 

\

\noindent 
{\bf Remark 1.} Whether or not $\mathcal M$ is standard, if $n$ is divisible by  more than one prime, $\mathcal M/n\mathcal M$ is not local.
If $n$ is divisible by finitely many primes  then $\mathcal M/n\mathcal M$ is semilocal. In general for $\mathcal M$ nonstandard, $\mathcal M/n\mathcal M$ is {\it nonstandard semilocal} if $n$ is divisible by infinitely many primes (and there are always such $n$ when  $\mathcal M$ is nonstandard).

\

\noindent {\bf Remark 2.} We note that for any $\mathcal M$, and prime $p$, standard or nonstandard, $\mathcal M$ is not Henselian with respect to the $p$-adic valuation $v_p$ on $\mathcal M$. First, suppose $p\not=2$. If  $v_p$ is Henselian then $1+p$ is a square, so $1+p=y^2$ for some $y\in \mathcal M$. Hence, $(y-1)(y+1)=p$, and so either
\begin{enumerate}[label=\alph*)]
\item
$y-1= 1$ and $y+1=p$

\item
$y-1= -1$ and $y+1=-p$
\item
$y-1= p$ and $y+1=1$
\item
$y-1= -p$ and $y+1=-1$

\end{enumerate}

a) implies $p=3$. b) and c) are impossible. d) implies $p=3$. 

So suppose $p=3$. Then, if Hensel's lemma holds, $1+2\cdot 3=7$ is a square. Clearly, $PA$ implies $7$ is not a square.

Finally, assume $p=2$. If Hensel's lemma holds then $1+8\cdot h$ is a square for all $h$, so $17$ is a square, impossible in $PA$.

\subsection{$\mathcal M$  nonstandard}

Each of the rings $\mathbb Z/p^k\mathbb Z$ can be interpreted in  $\mathcal M$  in a uniform way for each prime $p$ and each positive integer $k$, as can each of the natural maps  $\mathbb Z/p^{k+1} \mathbb Z \rightarrow \mathbb Z/p^k\mathbb Z$.  But it not possible to interpret $\mathbb Z_p$ in $\mathcal M$ as the inverse limit of the $\mathbb Z/p^k\mathbb Z$'s with the associated maps. 

$\L$os' theorem implies  that if $D$ is a nonprincipal  ultrafilter on the set of primes then the ultraproduct $\prod_{D} \mathbb Z_p$   is an Henselian valuation ring, whose maximal ideal is $\prod_D\mu_p$ where $\mu_p$ is the maximal ideal of $\mathbb Z_p$. Also the  value group of the ultraproduct of the local domains is the ultraproduct of the value groups of the $\mathbb Z_p$'s, and so an ultrapower of $\mathbb Z$.  Ax's results in \cite{ax4} are needed to prove  that the residue field $\prod_D \mathbb Z_p/ \prod_D\mu_p= \prod_D\mathbb Z_p/\mu_p$ is a pseudofinite field.
\smallskip

We will first analyze the basic algebraic properties of each of the quotients  $\mathcal M/p^k\mathcal M$, and we will show that also for $\mathcal M$ nonstandard $\mathcal M/p^k\mathcal M$ is a  local Henselian ring. We will then appeal to classical results of model theory of Henselian fields (see \cite{ershov}, \cite{basarab}) to understand the theory of the class of all $\mathcal M/p^k\mathcal M$ for $p$  prime and $k$ positive in $\mathcal M$. We will make use also of some constructions and ideas in  \cite{woods}. 

The valuation  $v_p$ induces a  ``valuation'' (which we will denote  by $v$) on the quotient ring $\mathcal M/p^k\mathcal M$. The residue field of $\mathcal M\big/p^{k}\mathcal M$ is either $\mathbb F_p$ if $p$ is standard, or a characteristic $0$ pseudofinite field if $p$ is nonstandard.

\begin{thm}
\label{henselian}
For general prime $p$ and $k>0$, $\mathcal M\big/p^{k}\mathcal M$ is a local Henselian ring, and the unique maximal ideal is principal. 

\end{thm}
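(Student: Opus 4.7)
First I pin down the local structure with its principal maximal ideal. Take $\mathfrak m := p\mathcal M/p^k\mathcal M$, the image of $p\mathcal M$ in the quotient; this is an ideal, proper because $PA$ proves $p \nmid 1$. If $a \in \mathcal M$ with $p \nmid a$, then $PA$ proves $\gcd(a, p^k) = 1$, so Bezout yields $u, v \in \mathcal M$ with $ua + vp^k = 1$, making the image of $a$ a unit in $\mathcal M/p^k\mathcal M$. Hence $\mathfrak m$ is precisely the set of non-units, so $\mathcal M/p^k\mathcal M$ is local with maximal ideal the principal ideal generated by (the image of) $p$. What remains is the Henselian property, both existence and uniqueness of lifts.

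For existence, the key is that $PA$ proves Hensel's lemma uniformly. Any monic $f \in (\mathcal M/p^k\mathcal M)[X]$ has externally finitely many coefficients and lifts to a monic $F \in \mathcal M[X]$; any simple residue-field root $\bar a$ of $\bar f$ lifts to $A \in \mathcal M$ with $p \mid F(A)$ and $p \nmid F'(A)$. I would then prove, by $\mathcal M$-internal induction on $j \leq k$, the statement ``there exists $B_j$ with $F(B_j) \equiv 0 \pmod{p^j}$ and $B_j \equiv A \pmod p$''. The step from $j$ to $j+1$ exploits the polynomial Taylor identity
\[
F(B_j + p^j h) \;=\; F(B_j) + F'(B_j)\, p^j h + p^{2j} h^2\, G(B_j, h),
\]
for some $G \in \mathbb Z[X, Y]$ depending only on $F$: since $F'(B_j) \equiv F'(A) \not\equiv 0 \pmod p$ is a unit modulo $p$ by step one, one solves for $h \pmod p$ so that $B_{j+1} := B_j + p^j h$ satisfies $F(B_{j+1}) \equiv 0 \pmod{p^{j+1}}$. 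The construction is arithmetical and provable in $PA$, so the induction reaches $j = k$ inside $\mathcal M$ even when $k$ is nonstandard; setting $B := B_k$ gives the required Hensel lift.

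For uniqueness, suppose $F(A_0), F(A_1) \equiv 0 \pmod{p^k}$ with $A_0 \equiv A_1 \equiv A \pmod p$. The identity $F(Y) - F(X) = (Y - X)\, R(X, Y)$ in $\mathbb Z[X, Y]$, where $R(X, X) = F'(X)$, evaluated at $(A_0, A_1)$ gives $F(A_1) - F(A_0) = (A_1 - A_0)\, R(A_0, A_1)$, and $R(A_0, A_1) \equiv F'(A) \not\equiv 0 \pmod p$ is a unit modulo $p^k$. Hence $A_1 \equiv A_0 \pmod{p^k}$, proving uniqueness. The main conceptual obstacle throughout is recognizing that the classical Newton lifting must be cast as a $PA$-provable arithmetical induction so that it runs through nonstandard $k \in \mathcal M$; once that is granted, the proof is exactly a ``truncation of classical Henselian machinery'' as announced in the introduction.
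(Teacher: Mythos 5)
Your proof is correct, but the existence half of the Henselian property is handled by a genuinely different mechanism from the paper's. The paper runs the quadratic Newton iteration $\beta_{j+1} = \beta_j - f(\beta_j)/f'(\beta_j)$ as an $\mathcal M$-definable sequence, obtains strictly increasing valuations $v_p(f(\beta_j))$, and then invokes the definable Pigeonhole Principle of $PA$: if the sequence never achieved $v_p(f(\beta_m)) \geq k$, one would get a definable injection of $\mathcal M$ into a bounded initial segment, a contradiction. Your argument instead replaces the quadratic iteration with the slower but cleaner \emph{linear} lift, raising the modulus by exactly one power of $p$ per step, and then establishes the existence of $B_k$ by a bounded $\Sigma_1$-induction on $j \leq k$ internal to $\mathcal M$ — an approach that bypasses the Pigeonhole overspill argument entirely. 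This buys a more direct, inductively transparent proof at the cost of the faster convergence rate, and makes explicit the point the paper only gestures at, namely that the recursion ``can be coded in $\mathcal M$'': with a standard-degree monic $F$ the lifting predicate is a $PA$-formula with parameters, so the internal induction up to nonstandard $k$ is legitimate. You also supply the uniqueness clause of Hensel's lemma via the difference-quotient identity $F(Y)-F(X)=(Y-X)R(X,Y)$, which the paper's proof does not address; some formulations of ``Henselian local ring'' require it, so this is a worthwhile addition rather than redundancy. The treatment of the local structure and principal maximal ideal via Bezout matches the paper in substance.
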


\noindent {\it Proof:} The units in $\mathcal M\big/p^{k}\mathcal M$ are  $m+(p^k)$ where  $m$ is prime to $p$. Clearly, the non units form an ideal, and this is the unique maximal ideal of  $\mathcal M\big/p^{k}\mathcal M$ and is generated by $p+(p^k)$.  

Let $f(x)$ be a monic polynomial over $\mathcal M$, and  $\alpha\in \mathcal M$ such that $v_p({f}(\alpha )) > 0 $ and  $v_p(f'(\alpha ))=0 $.  We want to show that  there exists $\beta \in \mathcal M$ such that $f(\beta)=0$ and $v_p(\alpha-\beta)>0$. We use inside $\mathcal M$ the standard approximation procedure. The informal recursion (as it would be done in $\mathbb N$) puts $\beta_0=\alpha$, and $\beta_1=\beta_0+\epsilon_1$, where $\epsilon_1$ should be chosen judiciously from $\mathcal M$. So, $f(\beta_1)=f(\beta_0+\epsilon_1)=f(\beta_0)+f'(\beta_0)\epsilon_1+o (\epsilon_1^2)$. Choose $\epsilon_1=-\frac{f(\alpha)}{f'(\alpha )}$ (an infinitesimal). 
Hence, $v_p(f(\beta_1))=v_p(f(\beta_0+\epsilon_1))\geq  (v_p(f(\beta_0))^2>0$, and $v_p(f'(\beta_1))=v_p(f'(\beta_0)+\epsilon_1f''(\beta_0)+o(\epsilon_1^2))
=0 $.
We iterate this procedure which can be coded in $\mathcal M$, and we get a sequence $\beta_j$'s of elements of $\mathcal M$,  such that $v_p(f(\beta_j))>0$,   $v_p(f'(\beta_j))=0$, $v_p(f(\beta_j))< v_p(f(\beta_{j+1}))$, and $v_p(\beta_0-\beta_j)>0$. If  $v_p(f(\beta_m))\geq k$ for some $m$, then $\beta_m$ is  a solution of $f$ in  $\mathcal M\big/p^{k}\mathcal M$. By the Pigeonhole Principle  this always  happens since otherwise we would have a definable injective map from $\mathcal M$ into an initial segment $[0,p^k)$, clearly a contradiction. 

\medskip

Note that the condition that the unique maximal ideal is principal is elementary. In  general, in a local ring the unique maximal ideal need not be principal.


\section{Truncations}

The principal ideals of $\mathcal M\big/p^{k}\mathcal M$ are generated by $p^j$ for $0<j\leq k$, and are linearly ordered by the divisibility condition with minimum $(0)$ and maximum $(p)$. 

On the ring $\mathcal M\big/p^{k}\mathcal M$ there is a {\it  truncated valuation} $v$ with values in  $[0,k]$, defined by
$$v(m+(p^k))= \left\{ \begin{array}{ll}

v(m) & \mbox{ if } v(m)<k \\
k  & \mbox{ if } v(m)\geq k
\end{array} \right.$$
\noindent  satisfying 
\begin{enumerate}
\item 
$v(x+y)\geq \min (v(x), v(y))$
\item
$v(xy)=\min (k, v(x)+v(y))$
\item 
$v(1)=0$
\item
$v(0)=k$
\end{enumerate}

We can construe $v$ as a map to a ``truncated ordered abelian group",  henceforward called TOAG.  In \cite{DDM} axioms which are true in initial segments of ordered abelian groups are identified. We list them here. Some are obvious, while others need some calculations. In \cite{DDM} it is also shown that models of these axioms can be realized as initial segments of ordered abelian groups. 
 Notice that in \cite{DDM} it is not specified if  the order is discrete. We are mostly interested in discrete orders and some extra axioms will be added later. The language contains two binary operations $+$ and $\dotminus$,  a binary relation $\leq$, and two constants $0$ and $\tau$. 
 
\medskip

\noindent {\bf Axioms:}
\begin{enumerate}
\item
$+$ is commutative 
\item
$x+0=x$ and $x+\tau = \tau$
\item
$x\leq y$ and $x_1\leq y_1$ imply $x+x_1\leq y+y_1$
\item 
$+$ is associative
\item
$x+y=x+z<\tau$ implies  $y=z$  ({\it cancellation rule})
\item
If $x\leq y<\tau$ then there is a unique $z$ with $x+z=y$
\item
there are $z$ such that $z<\tau$ and $x+z=\tau$, and  $\tau \dotminus x=\min \{z: x+z=\tau \}$
\item
$\tau \dotminus (\tau \dotminus x )=x$
\item
If $0\leq x,y<\tau$ and $x+y=\tau$ then $y \dotminus (\tau \dotminus x ) = x \dotminus (\tau \dotminus y )$
\item
If $x+(y+z)=\tau$ and $y+x<\tau$ then $x \dotminus (\tau \dotminus (y +z))= z \dotminus (\tau \dotminus (x +y))$
\item
If $y+z<\tau$, $x+(y+z)=\tau$, $x+y=\tau$, and $z+(y \dotminus(\tau \dotminus x )) <\tau$
\item 
If $y+x=\tau$ and $y+z<\tau$ then $z+(y \dotminus (\tau \dotminus x))<\tau$
\item
If $y+z=y+x=\tau$ and $z+(y \dotminus (\tau \dotminus x))<\tau$ then $x+(y \dotminus (\tau \dotminus z)) = x\dotminus (\tau  \dotminus y) + z))$
\item 
If $y+z=y+x=\tau$  and  $x+(y \dotminus (\tau \dotminus z)) = \tau$ then $z+(y \dotminus (\tau \dotminus x)) = \tau$
\item 
If $y+z=y+x=x+(y \dotminus (\tau \dotminus z)) = \tau$ then $(y \dotminus (\tau \dotminus x)) \dotminus (\tau \dotminus z)= (y \dotminus (\tau \dotminus z)) \dotminus (\tau \dotminus x)$. 
\end{enumerate}

A proof of the following fundamental result is in  \cite{DDM}. 
\begin{thm}
Let $[ 0,\tau ]$ be a TOAG with $+, \dotminus$ and $\leq$. Then there is an ordered abelian group $(\Gamma, \oplus, \leq_{\Gamma})$ with $P$ the semigroup of non-negative elements, and an element $\tau_{\Gamma}$ in $P$ such that  $([ 0,\tau ], +, \leq)$  is isomorphic to   $([ 0,\tau_{\Gamma} ], \oplus, \leq_{\Gamma})$, where the structure on $[ 0,\tau_{\Gamma} ]$ is the one  induced by  $\Gamma$. 
\end{thm}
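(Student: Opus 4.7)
The plan is to realize $\Gamma$ as the Grothendieck group of a cancellative abelian monoid $P$ built from the ``non-saturated'' part $[0,\tau)$ of the given TOAG. First, set $P_0 = [0,\tau)$ and restrict $+$ to a partial operation $\oplus_0$, where $a \oplus_0 b$ is defined and equal to $a+b$ exactly when $a + b < \tau$. Axioms $1$, $2$, $4$, $5$ say that $(P_0,\oplus_0)$ is a commutative cancellative partial monoid with identity $0$, and axiom $6$ provides, inside $P_0$, a unique partial difference $y-x$ whenever $x \leq y < \tau$. By axiom $3$ the order on $P_0$ is total and compatible with $\oplus_0$, so $(P_0,\oplus_0,\leq)$ behaves like the ``bounded piece'' of a positively ordered monoid.

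Next I would extend $(P_0,\oplus_0)$ to a genuine cancellative abelian monoid $P$: take the free commutative monoid on the symbols $\{[a] : a \in P_0 \setminus \{0\}\}$, modulo the relations $[a]+[b] = [a \oplus_0 b]$ whenever $a \oplus_0 b$ is defined. The TOAG axioms should then ensure that $P$ is cancellative and that the canonical map $P_0 \hookrightarrow P$ is an order-reflecting injection. Form $\Gamma$ as the Grothendieck group of $P$; because $P$ is cancellative and abelian, it sits inside $\Gamma$ as a sub-monoid, and declaring $g \geq_\Gamma 0$ iff $g \in P$ gives a total, translation-invariant order on $\Gamma$, with positive cone exactly $P$. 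Finally, define $\tau_\Gamma := [a] + [\tau \dotminus a] \in P$ for any $0 < a < \tau$; since $\tau \dotminus a$ is defined (axiom $7$) as the \emph{minimum} $z$ with $a+z = \tau$, the element $[a]+[\tau \dotminus a]$ should not depend on $a$, and the delicate instances of that independence are exactly what the more technical axioms (particularly $9$, together with $8$, $10$) are there to supply.

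The required isomorphism $\varphi : [0,\tau] \to [0,\tau_\Gamma]$ is then $\varphi(0)=0$, $\varphi(\tau)=\tau_\Gamma$, and $\varphi(a) = [a]$ for $0<a<\tau$. Order preservation is built into the construction. For compatibility with the truncated sum: if $a \oplus_0 b$ is defined then $[a]+[b] = [a \oplus_0 b]$ by the defining relations, matching $\varphi(a+b)$; otherwise $[a]+[b] \geq_\Gamma \tau_\Gamma$ in $\Gamma$, so the sum in $[0,\tau_\Gamma]$ truncates to $\tau_\Gamma = \varphi(\tau)$, matching axiom $2$. The main obstacle will be the proof that the monoid $P$ really is cancellative and that $\tau_\Gamma$ is well-defined: axioms $9$--$15$ are engineered precisely to supply, in the truncated setting, the compatibility identities that are automatic inside an honest OAG where $\dotminus$ is just a shorthand for group subtraction, and one must thread them through a combinatorial rewriting argument showing that no unintended collapses occur in $P$. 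If that direct analysis proves awkward, a back-up plan is to construct $\Gamma$ in one step as equivalence classes of pairs $(a,b) \in P_0 \times P_0$ representing formal differences, with equivalence and addition defined through $\oplus_0$ and $\dotminus$; the same axioms would then be used to verify well-definedness of the operations.
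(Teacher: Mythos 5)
The paper does not give its own proof of this statement: it simply lists the TOAG axioms and cites \cite{DDM} for the proof. So I can only assess your argument on its own merits, with an eye to what the axioms are actually set up to do.

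The central problem is that you defer exactly the hard part. You propose to present $P$ as a free commutative monoid on $[0,\tau)\setminus\{0\}$ modulo the relations $[a]+[b]=[a+b]$ for $a+b<\tau$, and then you need three non-trivial facts: that the quotient monoid $P$ is cancellative, that $P_0\hookrightarrow P$ is injective and order-reflecting, and that the elements of $\Gamma$ lying in $[0,\tau_\Gamma]$ are \emph{exactly} the classes $[a]$ with $a\in[0,\tau)$ together with $\tau_\Gamma$ (no ``extra'' elements slip into the initial segment). None of these is argued; you write that the axioms ``should then ensure'' them and that axioms $9$--$15$ ``are engineered precisely to supply'' the needed identities. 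But establishing cancellativity and no-collapse for a commutative monoid given by generators and relations is a genuine word-problem, and it is precisely the content of the theorem. A sentence asserting that a combinatorial rewriting argument will work is not a proof, and the shape of axioms $10$--$15$ --- which involve nested $\dotminus$-expressions and case splits on which sums overflow --- is not one that plugs naturally into a confluence argument for your presentation; you never actually invoke them.

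A more concrete symptom: your construction is wrong in an edge case and you don't notice. If $[0,\tau)=\{0\}$ (i.e.\ $\tau$ is the successor of $0$), your free monoid is on the empty set, so $P=\{0\}$ and $\Gamma=\{0\}$; but the theorem requires an OAG with $\tau_\Gamma>0$, so $\Gamma$ should be (e.g.) $\mathbb Z$. Your definition $\tau_\Gamma=[a]+[\tau\dotminus a]$ for ``any $0<a<\tau$'' also has no candidate $a$ here. This is minor and fixable, but it indicates that the construction has not been checked.

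What the axioms are really pointing at is a direct construction, closer to your back-up plan: build $\Gamma$ explicitly (for instance on underlying set $\mathbb Z\times[0,\tau)$), with addition defined by cases --- $(m,a)+(n,b)=(m+n,\,a+b)$ when $a+b<\tau$, and $(m+n+1,\,b\dotminus(\tau\dotminus a))$ when $a+b=\tau$ --- and lexicographic order, with $\tau_\Gamma=(1,0)$. Then the content of axioms $8$--$15$ is exactly that this operation is well-defined, commutative (axiom $9$), and associative through all the overflow cases, and that subtraction behaves. In that approach there is no cancellativity word-problem to solve at all: one verifies group-law identities directly from the listed axioms, and the initial segment $[0,(1,0)]$ is visibly $\{(0,a):a\in[0,\tau)\}\cup\{(1,0)\}\cong[0,\tau]$. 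Your back-up ``pairs of elements'' idea is essentially this, and if you pursue the problem further I would start there, taking the pair $(m,a)$ to mean ``$m$ overflows plus $a$,'' and checking associativity axiom by axiom rather than hoping a free-object quotient is as nice as you need it to be.
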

Our primary interest  in this paper is in  models of Presburger, and extra conditions are needed for characterizing the TOAGs  which are initial segments of models of Presburger. We expand the language for TOAG with an extra constant symbol $1$. The extra axioms  are also sufficient as shown in the following theorem in \cite{DDM}.

\begin{thm}
\label{TOAGPRES}
A TOAG $[0,\tau] $ with least positive element $1$ is a Presburger TOAG if and only if it satisfies the following conditions
\begin{enumerate}
\item
$[0,\tau] $ is discretely ordered and every positive element is a successor;
\item
for every positive integer $n$ and each $x\in [0,\tau] $ there is a $y \in [0,\tau] $  and an integer $m<n$ such that $x=\underbrace{(y+\ldots +y)}_\text{n \mbox{ {\rm   times}}}+\underbrace{(1+\ldots +1)}_\text{m \mbox{ {\rm  times}}}$.
\end{enumerate}
\end{thm}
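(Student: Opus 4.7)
The plan is to prove both directions using the preceding representation theorem, which embeds any TOAG as an initial segment $[0,\tau_\Gamma]$ of an ordered abelian group $\Gamma$.

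For the forward direction, suppose $[0,\tau]$ sits as an initial segment of a model $\Gamma$ of Presburger. Axiom (1) is immediate: discreteness and successor-ness of positive elements are Presburger axioms, and they restrict to any initial segment. For axiom (2), given $x \in [0,\tau]$, Presburger's division-with-remainder in $\Gamma$ yields $y \in \Gamma$ and $m \in \{0, \ldots, n-1\}$ with $x = ny + m$. Since $0 \le x$ forces $y \ge 0$, and $ny \le x \le \tau$ forces $y \in [0,\tau]$, the sum never reaches the truncation level, so the equation holds in the truncated sense as well.

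For the backward direction, apply the preceding theorem to obtain $(\Gamma, \oplus, \le_{\Gamma})$ with $\tau_\Gamma$ and an isomorphism $[0,\tau] \cong [0,\tau_\Gamma]$. Replace $\Gamma$ by the subgroup generated by $[0,\tau_\Gamma]$ if necessary; this preserves the fact that $[0,\tau_\Gamma]$ is an initial segment. First show $\Gamma$ is discretely ordered with least positive element $1_\Gamma$: any hypothetical $g \in \Gamma$ with $0 < g < 1_\Gamma$ would lie in $[0,\tau_\Gamma]$ (since $1_\Gamma \le \tau_\Gamma$), contradicting that $1$ is least positive in $[0,\tau]$. Next verify that every positive element of $\Gamma$ is a successor: for $g \le \tau_\Gamma$ use axiom (1), handling the boundary case $g = \tau_\Gamma$ by noting that $\tau_\Gamma - 1$ exists in the group $\Gamma$; for $g > \tau_\Gamma$ use that $g - 1 \ge \tau_\Gamma$ lies in $\Gamma$.

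The main step is to lift the truncated division-with-remainder of axiom (2) to a full division-with-remainder in $\Gamma$. For $x \in [0, \tau_\Gamma)$, axiom (2) gives $x = ny + m \cdot 1$ in the truncated sense, but since $x < \tau_\Gamma$ no truncation has occurred, so the equation holds verbatim in $\Gamma$. For the endpoint $x = \tau_\Gamma$, apply (2) to $\tau - 1$ (which is strictly below $\tau$) to get $\tau_\Gamma - 1 = ny + m$ in $\Gamma$, then add $1$ and absorb a possible carry from $m + 1 = n$ into $y$. For a general $g \in \Gamma$, write $g = a_1 + \cdots + a_r - b_1 - \cdots - b_s$ with $a_i, b_j \in [0,\tau_\Gamma]$ (possible by our choice of $\Gamma$), apply division-with-remainder to each summand, and use that division-with-remainder is preserved under $+$ and $-$: if $a = nu + r$ and $b = nv + s$ then $a \pm b = n(u \pm v) + (r \pm s)$, and $r \pm s$ lies in a bounded range of integers so can be normalized modulo $n$.

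The main obstacle is the interplay between the truncation at $\tau$ and the need for equalities in the ambient group $\Gamma$: axiom (2) cannot be applied directly at $x = \tau$ because the truncated sum only tells us $ny + m \ge \tau_\Gamma$. The ``step down by $1$ and add back'' device circumvents this, and the closure under sums/differences lets the division-with-remainder propagate from the initial segment out to the whole of $\Gamma$. Once discreteness, least positive element $1_\Gamma$, and division-with-remainder for every $n$ are established, $\Gamma$ is a model of Presburger, and $[0,\tau]$ is realized as its initial segment $[0, \tau_\Gamma]$, concluding the proof.
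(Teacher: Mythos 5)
The paper does not actually present a proof of Theorem~\ref{TOAGPRES}: it states the result and refers to \cite{DDM} for the argument, so there is no in-paper proof to compare against. Evaluated on its own terms, your argument is correct and takes the natural route through the representation theorem stated just above. For the forward direction you correctly observe that Presburger division-with-remainder $x = ny + m$ (with $0 \le m < n$) forces $y \ge 0$ and $y \le ny \le x \le \tau$, so $y \in [0,\tau]$, and that all partial sums stay below $\tau$ so the truncated $+$ agrees with group addition. For the converse, passing to the subgroup $\Gamma'$ of $\Gamma$ generated by the interval $[0,\tau_\Gamma]$ is the right move: the interval remains an initial segment of $\Gamma'$, so discreteness of $\Gamma'$ falls out of the minimality of $1$ in the TOAG, and the three-case lift of axiom (2) — verbatim transfer for $x < \tau_\Gamma$ since no truncation can occur in a sum landing strictly below $\tau$, the step-down-and-carry device at $x = \tau_\Gamma$ (using axiom (1) to produce $\tau - 1$), and the propagation to arbitrary $g \in \Gamma'$ via sums and differences of elements of $[0,\tau_\Gamma]$ with renormalization of the remainder modulo $n$ — gives exactly what is needed. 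The one point worth stating explicitly, which you leave implicit, is that when combining $g = \sum a_i - \sum b_j$ the resulting quotient $\sum u_i - \sum v_j + q\cdot 1$ does lie in $\Gamma'$ because each $u_i, v_j \in [0,\tau_\Gamma]$ and $1 \in [0,\tau_\Gamma]$; but that is immediate. I see no genuine gap.
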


\bigskip

Let $PresTOAG$ be the set of axioms in the language containing $+$, $\dotminus$, $\leq$,  $0$ and $\tau$ for Presburger   TOAGs together with all the {\it remainders axioms}  as above. 
\begin{thm}
\begin{enumerate}
\item
$PresTOAG$  is model complete.

\item
$PresTOAG$  is not complete. The complete extensions are in natural correspondence with $\hat{{\mathbb Z}}$, where to $(f(p))_{f(p)\in \mathbb Z_p}$ corresponds $$PresTOAG + \tau \equiv f(p) ( \mbox{mod } p^k)$$ for every $k\geq 1$.

\item
The theory of models of $PresTOAG$ has quantifier-elimination in the language augmented by the relations $\equiv_n$  (divisibility by $n$)  for each $n\geq 2$.
\end{enumerate}

\end{thm}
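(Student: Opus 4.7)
The plan is to reduce all three parts to corresponding, classical facts about Presburger arithmetic, using the realization (Theorem 3.1 in the excerpt) of any $PresTOAG$ as an initial segment $[0,\tau_{\Gamma}]$ of an ordered abelian group $\Gamma$. The extra axioms of Theorem \ref{TOAGPRES} (discreteness, every positive element a successor, Euclidean remainder for each $n$) force $\Gamma$ to be a model of Presburger arithmetic containing a marked element $\tau_{\Gamma}$. Under this realization the TOAG operations translate into bounded Presburger formulas, namely $x+_{T}y = \min(\tau, x+_{\Gamma}y)$ and $\tau \dotminus x = \tau -_{\Gamma} x$, while the divisibility predicates $\equiv_n$ translate verbatim. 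Throughout I would fix, in each completion, such a $(\Gamma, \tau)$ and work back and forth between the truncation and the ambient Presburger group.

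For part (3), quantifier elimination in the augmented language: I would prove this by the standard one-variable reduction. It suffices to eliminate $\exists y$ from a formula $\varphi(\bar x, y)$ that is a conjunction of atomic and negated atomic $PresTOAG$-formulas. Translating to $\Gamma$, $\varphi$ becomes a conjunction of (in)equalities and congruences between linear terms in $\bar x, y, \tau$, together with the side condition $0 \leq y \leq \tau$. I would then invoke Presburger's quantifier elimination in $\Gamma$ to get a quantifier-free Presburger formula in $\bar x, \tau$ whose terms are linear combinations of $\bar x$ and $\tau$. The subtle point is to verify that these terms can always be rewritten using the truncated operations $+_T, \dotminus$, i.e. that the elimination algorithm does not force one to speak about elements outside $[0,\tau]$; the discreteness and the coherent use of $\equiv_n$ are what make this go through.

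Once QE is in hand, parts (1) and (2) follow quickly. Model completeness is immediate from QE. For the classification of completions, observe that by axiom (2) of Theorem \ref{TOAGPRES} each element $\tau$ has a well-defined residue $\tau \bmod n$ for every $n \geq 1$, and these residues form a coherent system, i.e. an element $f \in \hat{\mathbb{Z}}$. The sentences $\{\tau \equiv f(n) \ (\mathrm{mod}\ n) : n \geq 1\}$ form an isomorphism invariant of the complete theory, and by QE any two models with matching $\tau$-residues satisfy the same sentences. Conversely, given $f \in \hat{\mathbb{Z}}$, take an $\aleph_1$-saturated model of Presburger containing an element $\tau$ realizing $\{\tau \equiv f(n) \ (\mathrm{mod}\ n)\}_{n}$ (the type is finitely realized already in $\mathbb{Z}$, hence consistent) and truncate to $[0,\tau]$; this yields the required bijection with $\hat{\mathbb{Z}}$.

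The main obstacle is part (3), and within it the verification that the Presburger elimination procedure stays inside the truncated language. The natural worry is that eliminating $y$ with the constraint $0 \leq y \leq \tau$ could introduce auxiliary terms that refer to multiples of $\tau$ lying beyond the initial segment, or that the interaction between inequalities and the congruences $\equiv_n$ could produce case splits not expressible via $+_T$ and $\dotminus$. Resolving this is essentially a careful bookkeeping of Presburger's divisibility lemma applied to the bounded interval $[0,\tau]$; once that is established the remainder of the theorem is formal.
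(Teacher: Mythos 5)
Your proposal follows essentially the same strategy as the paper: realize each $PresTOAG$ as an initial segment $[0,\tau_\Gamma]$ of a Presburger model via Theorem 3.1 (with Theorem \ref{TOAGPRES} ensuring $\Gamma \models$ Presburger), and then transfer Presburger quantifier elimination back to the truncation. The only real difference is organizational: you prove (3) first and derive (1) and (2) from it, whereas the paper proves (1) directly by arguing that types of tuples in $S_1$ and $S_2$ are both determined by order and congruence conditions over the ambient Presburger model, and then dismisses (2) and (3) in one line each as consequences of Presburger QE. Your version is arguably the cleaner decomposition, and you correctly flag the one place where the paper's terse ``follows from elimination of quantifiers for Presburger'' hides genuine bookkeeping, namely that the output of the Presburger elimination must be reexpressed using $+$, $\dotminus$ and the $\equiv_n$ so as to stay inside $[0,\tau]$ — this is exactly the check that needs to be done, and neither proof spells it out in full, but you at least identify it.
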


\noindent
{\it Proof: 1. } Let $S_1, S_2$ be models of $PresTOAG$ and assume there is a monomorphism $S_1\hookrightarrow S_2$. Since $\tau$ is in the language, $S_1$ and $S_2$ have the same top element. Let $S_2$ be  an initial segment of a model  $S$ of Presburger.  Let $\lambda_1,  \ldots,\lambda_m\in S_1$. We claim the type of $\langle \lambda_1,  \ldots,\lambda_m\rangle$ in $S_1$ is the same as the type of $\langle \lambda_1,  \ldots,\lambda_m\rangle$ in $S_2$. By the elimination of quantifiers for Presburger Arithmetic the type of $\langle\lambda_1,  \ldots ,\lambda_m\rangle$ over any $S$ is determined by order and congruence conditions in terms of $\tau$ and $1$, and so it is the same in both $S_1$ and $S_2$.

\smallskip

\noindent
{\it 2.} Clearly the conditions got from $f\in \hat{{\mathbb Z}}$ are consistent. The completeness comes easily from Theorem \ref{TOAGPRES} and the elimination of quantifiers for Presburger Arithmetic.

\smallskip

\noindent
{\it 3.}  This follows from elimination of quantifiers for Presburger Arithmetic.

\section{Henselian local rings}
\subsection{}
In Section 3 we considered a valuation $v$ on the Henselian local ring $\mathcal M/p^k\mathcal M$ with values onto the Presburger TOAG $[0,k]$. In addition, the residue field of the local ring is either $\mathbb F_p$ if $p$ is standard, and otherwise a   characteristic $0$ pseudofinite field. The union of these alternatives says exactly that the residue field is a model of the theory $PrimeFin$. Moreover, $v(p)=1$ if $p$ is standard. A natural problem is whether  a Henselian local ring with these properties is a  quotient of the valuation ring of  a Henselian field with the same residue field, and value group a model of Presburger Arithmetic, and in addition $v(p)=1$ if $p$ is standard.

The first order theory of a Henselian valued field  $(K, v)$, in the language of valued fields, is well understood in the particular cases when the value group is a model of Presburger and  the residue  field is either a characteristic  $0$ pseudofinite field  or the residue field is $\mathbb F_p $ and $v(p)=1$ (i.e. $K$ is unramified).

In these cases,  $Th(K, v)$ is completely determined by the theory of the residue field 
and the theory of the value group. 
As a consequence, one gets that $Th(K, v)$ is decidable if and only if the theory of the residue field and the theory of the value group are decidable.


\medskip

\begin{thm}
\label{henselianR}
  For any $\mathcal M\big/p^{k}\mathcal M$  there is a ring $R$ such that

1) $R$ is a Henselian valuation domain of characteristic $0$ and unramified, and the value group $\Gamma$ of $R$ is a $\mathbb Z$-group (i.e. a model of Presburger), with initial segment $[0,k]$.

2) $\mathcal M\big/p^{k}\mathcal M$ is isomorphic to the quotient ring $R\big/ I$ for some principal ideal $I$ of $R$.

3) the residue field of $R$ is naturally isomorphic to the residue field of $\mathcal M\big/p^{k}\mathcal M$. 

\end{thm}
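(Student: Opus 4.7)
The plan is to construct $R$ by Henselizing the localization of $\mathcal{M}$ at its $p$-adic prime ideal. Viewing $\mathcal{M}$ as a discretely ordered domain of characteristic $0$ with fraction field $K=\mathrm{Frac}(\mathcal{M})$, extend $v_p$ to $K$ by $v_p(a/b)=v_p(a)-v_p(b)$. A short calculation---clearing common powers of $p$ from numerator and denominator---shows that the valuation ring $\mathcal{O}_{v_p}$ of $(K,v_p)$ coincides with the localization $\mathcal{M}_{(p)}$. Set $R:=\mathcal{M}_{(p)}^{h}$, its Henselization at the maximal ideal $p\mathcal{M}_{(p)}$, and take $I:=p^{k}R$.

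Clauses~(1) and~(3) then follow from classical properties of Henselization of valuation rings. The Henselization of a valuation domain is a Henselian valuation domain with the same value group and residue field, so $R$ is a Henselian valuation domain of characteristic~$0$. The value group of $(K,v_p)$ is canonically identified with the additive group of $\mathcal{M}$ via $n\leftrightarrow v_p(p^{n})$ on the positive cone; this is a $\mathbb{Z}$-group, and its initial segment $[0,k]$ is exactly the truncated value group of $v$ on $\mathcal{M}/p^{k}\mathcal{M}$ from Section~3. Since $v_p(p)=1$, the ring $R$ is unramified. The residue field of $R$ coincides with that of $\mathcal{M}_{(p)}$, namely $\mathcal{M}/p\mathcal{M}$: this is $\mathbb{F}_p$ when $p$ is standard and a characteristic~$0$ pseudofinite field when $p$ is nonstandard, and in either case it equals the residue field of $\mathcal{M}/p^{k}\mathcal{M}$.

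For clause~(2), I claim the canonical composite $\mathcal{M}/p^{k}\mathcal{M}\to\mathcal{M}_{(p)}/p^{k}\mathcal{M}_{(p)}\to R/p^{k}R$ is a ring isomorphism. The first arrow is bijective by a direct computation inside $\mathcal{M}$: injectivity uses that $p^{k}\mid a$ in $\mathcal{M}$ whenever $a\in\mathcal{M}$ equals $p^{k}(b/c)$ with $p\nmid c$, while surjectivity uses that any $c\in\mathcal{M}$ prime to $p$ is a unit modulo $p^{k}$, so $a/c\equiv ac'\pmod{p^{k}}$ for a suitable $c'\in\mathcal{M}$. The second arrow is bijective by the general fact that Henselization commutes with quotients by ideals contained in the maximal ideal, combined with Theorem~\ref{henselian}, which tells us that $\mathcal{M}_{(p)}/p^{k}\mathcal{M}_{(p)}\cong\mathcal{M}/p^{k}\mathcal{M}$ is already Henselian and hence equal to its own Henselization.

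The main delicate step is that second bijection, and in particular its validity when $k$ is nonstandard. The standard proof realizes $\mathcal{M}_{(p)}^{h}$ as a filtered colimit of pointed \'etale local extensions $\mathcal{M}_{(p)}\to B$ and reduces the claim to showing that every such $B$ satisfies $B/p^{k}B\cong\mathcal{M}_{(p)}/p^{k}\mathcal{M}_{(p)}$; this follows from the infinitesimal lifting property of \'etale morphisms applied to two local rings with isomorphic residue fields whose maximal ideals satisfy $\mathfrak{m}^{k}=0$. Since $p^{k}$ is a bona fide internal element of $\mathcal{M}$, all ideals and quotients in play are classical commutative rings, and the possible nonstandardness of $k$ creates no obstruction beyond the need to be attentive when unwinding definitions.
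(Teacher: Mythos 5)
Your construction is correct, but it is genuinely different from the paper's. The paper does not Henselize anything: it uses the MacDowell--Specker theorem to take a proper elementary \emph{end} extension $\mathcal N\succ\mathcal M$, picks $\delta\in\mathcal N$ above $\mathcal M$, and sets $R=\mathcal N/\Delta$ where $\Delta=\{x\in\mathcal N: v_p(x)>v_p(a)\ \text{for all}\ a\in\mathcal M\}$; Henselianity of $R$ is then inherited as a homomorphic image of $\mathcal N/p^{\delta}\mathcal N$, which is Henselian by Theorem~\ref{henselian} applied inside $\mathcal N$, and the end-extension property delivers both the value group $\mathcal M$ and the identification $R/p^kR\cong\mathcal M/p^k\mathcal M$. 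You instead localize at $(p)$ and Henselize, taking $R=\mathcal M_{(p)}^{h}$ and $I=p^kR$. Both routes lean on Theorem~\ref{henselian}: the paper to get Henselianity of $\mathcal N/p^{\delta}\mathcal N$, you to know that $\mathcal M_{(p)}/p^k\mathcal M_{(p)}\cong\mathcal M/p^k\mathcal M$ is its own Henselization. What the paper's construction buys is that $R$ remains an object of the PA world (a quotient of a model of PA by an internal prime ideal), which fits the later ``truncation'' arguments; what yours buys is the avoidance of MacDowell--Specker and of the verification that $\Delta$ is prime, at the cost of importing the \'etale-neighborhood description of Henselization and the fact that it is an immediate extension preserving value group and residue field.

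One caveat on your final paragraph: the justification of $B/p^kB\cong\mathcal M_{(p)}/p^k\mathcal M_{(p)}$ for an \'etale neighborhood $B$ via the infinitesimal lifting property and ``$\mathfrak m^{k}=0$'' is only literally valid for standard $k$. When $k$ is nonstandard the maximal ideal of $\mathcal M/p^k\mathcal M$ is \emph{not} externally nilpotent ($\mathfrak m^{n}=(p^{n})\neq 0$ for every standard $n$), so formal \'etaleness along nilpotent thickenings does not apply. The correct reason, which you in fact already gave in the preceding paragraph, is that $\mathcal M_{(p)}/p^k\mathcal M_{(p)}$ is Henselian by Theorem~\ref{henselian}, hence every local-\'etale extension of it with trivial residue extension admits a retraction and is therefore an isomorphism; passing to the filtered colimit gives $R/p^kR\cong\mathcal M_{(p)}/p^k\mathcal M_{(p)}$. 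With that substitution your argument goes through for all $k$.
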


\noindent
{\it Proof: } Using the MacDowell-Specker Theorem (see \cite{kossakschmerl}), choose $\mathcal N$  a proper elementary end extension of $\mathcal M$. Let $d=p^{\delta}$ for some $\delta\in \mathcal N$ and $\delta>\mathcal M$. By Theorem \ref{henselian} the ring $\mathcal N\big/ p^{\delta}\mathcal N$ is a Henselian local ring with respect to the valuation induced by the $p$-adic valuation on $\mathcal N$. The set $\Delta =\{ x\in \mathcal N: v_p(x)>v_p(a) \mbox{ for all } a\in \mathcal M\}$ is a non-principal prime ideal of $\mathcal N$ and contains $ p^{\delta}$.  Hence, $\mathcal N\big/ \Delta$ is a domain, a local ring with value group $\mathcal M$, and residue field  either $\mathbb F_p$ or a characteristic $0$ pseudofinite field. Moreover,  $\mathcal N\big/ \Delta$   is also Henselian since it is  a homomorphic image of $\mathcal N\big/ p^{\delta}\mathcal N$ which is a  Henselian ring. Let $R=\mathcal N\big/ \Delta$ then $R\big/ p^kR\cong \mathcal M\big/ p^k\mathcal M$ (here we use that  $\mathcal N$ is an end extension of $\mathcal M$). \hfill $\Box$

\bigskip

Notice that the maximal ideal of $R$ is principal due to the fact that divisibility is a linear order on valuation domains, and the value group is discrete.  As already noticed  the maximal ideal of $\mathcal M\big/p^{k}\mathcal M$ is principal generated by $p+(p^k)$.

So far we have identified the $\mathcal M\big/p^{k}\mathcal M$  as  Henselian local rings, with two distinct cases (see \cite{ax1}, \cite{ax2}, \cite{ax3}):

\medskip
\noindent 
{\bf Case 1.} $p$ standard, and $\mathcal M\big/p^{k}\mathcal M$  isomorphic to some $S/\alpha S$, where $S\equiv \mathbb Z_p$, and $\alpha \in S$. In particular, $v_p(p)=1$.

\medskip

\noindent 
{\bf Case 2.} $p$ nonstandard, and $\mathcal M\big/p^{k}\mathcal M$  isomorphic to some $S/\alpha S$, with $S$ elementarily equivalent to a ring of power series with exponents in a $\mathbb Z$-group and coefficients from a pseudofinite field of characteristic $0$, with $\alpha \in S$.

In both cases we have the valuation onto a $PresTOAG$, and we can link up to the results of \cite{mints} where we began to analyze  the set of axioms for such rings.  Moreover, a trivial compactness argument shows that any local ring modelling those axioms is elementarily equivalent to some $\mathcal M\big/p^{k}\mathcal M$, where $\mathcal M\models PA$, see below. 

We note one important point. The rings $\mathcal M\big/p^{k}\mathcal M$ have the special property of recursive saturation, see \cite{MacResField}, so we cannot replace elementary equivalence by isomorphism in the preceding paragraph. However, there are standard resplendency arguments which give the converse when the ring $S/\alpha S$ is countable recursively saturated.


\subsection{}

Note that we still have not proved the converse that any $S/\alpha S$ as in Case 1 and 2 above is elementarily equivalent to some $\mathcal M\big/p^{k}\mathcal M$.

Now, we do this and in addition we obtain decidability results. 

\begin{thm}
\label{elementary theory}
Suppose $S$ is as in Cases 1 and Case 2 above, and $\alpha$ is a non-unit  and $ \alpha \not=0$. Then $S/\alpha S$ is elementarily equivalent to an ultraproduct of $\mathbb Z/p^k\mathbb Z$, for $p$ prime and $k>0$.
\end{thm}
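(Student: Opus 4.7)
The plan is to identify the elementary-equivalence class of $S/\alpha S$ with that of a suitable ultraproduct of genuine finite rings $\mathbb Z/p^k\mathbb Z$, via an Ax--Kochen--Ershov style transfer at the level of truncated Henselian local quotients. The first step is to record the invariants of $S/\alpha S$. By hypothesis $S$ is a Henselian valuation domain of characteristic $0$, with Presburger value group $\Gamma$ and residue field $\kappa$, where $\kappa\cong\mathbb F_p$ and $v(p)=1$ in Case 1, while $\kappa$ is a characteristic $0$ pseudofinite field in Case 2. Setting $\gamma:=v(\alpha)$, the induced truncated valuation on $S/\alpha S$ takes values in the Presburger TOAG $[0,\gamma]$ with residue field $\kappa$, and these three data (residue field, truncated value group, ramification in Case 1) are precisely what we need to match.

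The second step is to exhibit an ultraproduct of $\mathbb Z/p^k\mathbb Z$'s realizing these invariants. Choose an index set $I$ with a non-principal ultrafilter $D$, primes $(p_i)_{i\in I}$, and positive integers $(k_i)_{i\in I}$, and consider
\[
T \;:=\; \prod_D \mathbb Z/p_i^{k_i}\mathbb Z \;\cong\; \tilde S/\tilde\alpha\tilde S,
\]
where $\tilde S:=\prod_D \mathbb Z_{p_i}$ and $\tilde\alpha:=(p_i^{k_i})_D$. Then $\tilde S$ is itself a Henselian valuation domain, with value group $\prod_D \mathbb Z$ (a $\mathbb Z$-group) and residue field $\prod_D \mathbb F_{p_i}$. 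In Case 1 we take $p_i=p$ for all $i$; in Case 2 we let the $p_i$ tend to infinity along $D$, so that by Ax's theorem $\prod_D \mathbb F_{p_i}$ is a characteristic $0$ pseudofinite field. Using Theorem \ref{TOAGPRES} together with the description of complete extensions of $PresTOAG$ (parametrized by $\hat{\mathbb Z}$), we choose $(k_i)$ and $D$ so that $\prod_D [0,k_i]$ is elementarily equivalent to $[0,\gamma]$; a standard compactness argument shows that the residue-field and PresTOAG choices can be made simultaneously, since the two invariants are logically independent. After these choices, $\tilde S$ has the same residue field and value group as $S$ up to elementary equivalence, and $\tilde\alpha$ has the same PresTOAG valuation as $\alpha$.

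The third and decisive step is to upgrade matching of invariants to elementary equivalence of the truncations. By the Ax--Kochen--Ershov theorem cited before Theorem \ref{henselianR} (unramified mixed characteristic in Case 1, equal characteristic $0$ in Case 2), the Henselian valuation domains $S$ and $\tilde S$ are elementarily equivalent as valued rings, the ramification datum $v(p)=1$ being respected in Case 1. The remaining task is to push this equivalence down to the quotients $S/\alpha S$ and $\tilde S/\tilde\alpha\tilde S = T$, and this is exactly where the main obstacle lies: one needs a relative quantifier elimination for the family of truncated Henselian local rings $R/\beta R$, expressing any ring-language formula about such a quotient as a Boolean combination of residue-field formulas and PresTOAG formulas about $[0,v(\beta)]$, uniformly in $\beta$. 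Granted such a relativization---refining the Basarab--Ershov quantifier elimination for Henselian valued fields to the truncated setting---the matching of invariants from the previous paragraph yields $S/\alpha S\equiv T$, which is the desired conclusion.
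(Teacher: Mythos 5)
Your proposal is built around a ``truncated Ax--Kochen--Ershov principle'': match the residue field and the Presburger TOAG of $S/\alpha S$ against those of $\prod_D \mathbb Z/p_i^{k_i}\mathbb Z$, then invoke a relative quantifier elimination for truncated Henselian local rings to conclude elementary equivalence. You flag this last step yourself (``Granted such a relativization\ldots''), and this is where the genuine gap lies: such a truncated transfer principle is \emph{not} something you can cite, it is a substantial theorem, and in fact it is precisely what the rest of Section~5 of the paper is devoted to establishing (via a truncated Denef--Pas analysis, Theorem~\ref{elimination}, and the subsequent elementary-embedding criterion). Using it to prove Theorem~\ref{elementary theory} would make the logical order circular, or at minimum would invoke far heavier machinery than is needed.

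The paper's actual argument is considerably more economical and avoids any truncated quantifier elimination. It applies the classical Ax--Kochen--Ershov theorem to $S$ itself (not to the truncation): since the fraction field of $S$ is an unramified Henselian valued field whose residue field models the theory of finite prime fields and whose value group is a $\mathbb Z$-group, $S$ is elementarily equivalent to an ultraproduct $\prod_D \mathbb Z_p$. One then expands by the constant $\alpha$: the type of $\alpha$ over $\emptyset$ is realized in the (sufficiently saturated) ultraproduct by some $(\alpha_p)_D$ with each $\alpha_p$ a nonzero nonunit, giving $(S,\alpha)\equiv\bigl(\prod_D\mathbb Z_p,(\alpha_p)_D\bigr)$. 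Since the quotient ring by a principal ideal is uniformly \emph{interpretable} in the pair (ring, distinguished generator), elementary equivalence of the pairs immediately transfers to elementary equivalence of the quotients; and $\bigl(\prod_D\mathbb Z_p\bigr)/(\alpha_p)_D \cong \prod_D\bigl(\mathbb Z_p/\alpha_p\mathbb Z_p\bigr)\cong\prod_D\mathbb Z/p^{h_p}\mathbb Z$. No truncated elimination, and no separate bookkeeping of the invariants of the quotient, is required. If you want to salvage your route, you would need to first prove the truncated Denef--Pas result independently; but for the purposes of this theorem, the paper's ``quotient-is-interpretable'' trick is both shorter and logically prior.
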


\noindent
{\it Proof:} By Ax-Kochen-Ershov,  $S$ is elementarily equivalent to  an ultraproduct of $\mathbb Z_p$'s. So, $S/\alpha S$ is elementarily equivalent to an ultraproduct of $\mathbb Z_p/\alpha_p\mathbb Z_p$, where $\alpha_p$ is a non-zero, nonunit of $\mathbb Z_p$, and each $\mathbb Z_p\big/\alpha_p\mathbb Z_p$  is isomorphic to some $\mathbb Z/p^h\mathbb Z$. So, $S/\alpha S$ is elementarily equivalent to an ultraproduct of various $\mathbb Z/p^h\mathbb Z$, so it is isomorphic to some $\mathcal M\big/p^{k}\mathcal M$ where $\mathcal M$ is an ultrapower of $\mathbb Z$ and $p$ is a prime in $\mathcal M$ and $k\in \mathcal M$. \hfill $\Box$

\begin{corollario}
The elementary theories of the $\mathcal M\big/p^{k}\mathcal M$ are exactly the elementary theories of the $S/\alpha S$ where $S$ is as in Case 1 and 2. 
\end{corollario}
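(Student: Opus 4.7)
The plan is to prove the corollary as a direct combination of the two preceding theorems, by checking each inclusion of theories separately.

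The easier inclusion is that every elementary theory of some $\mathcal M/p^k\mathcal M$ is already among those of the $S/\alpha S$. This is exactly the content of Theorem~\ref{henselianR}, which constructs a Henselian valuation domain $R$ of characteristic $0$, unramified, with value group a $\mathbb Z$-group and with residue field equal to that of $\mathcal M/p^k\mathcal M$, together with the isomorphism $\mathcal M/p^k\mathcal M \cong R/p^k R$. To see that this $R$ slots into Case 1 or Case 2, I would invoke the Ax--Kochen--Ershov transfer principle: a Henselian valuation domain of characteristic $0$ with these invariants is elementarily equivalent either to $\mathbb Z_p$ (when the residue field is $\mathbb F_p$ and the ring is unramified) or to a ring of formal power series with exponents in a $\mathbb Z$-group and coefficients in a characteristic-$0$ pseudofinite field (otherwise), which is precisely what Cases 1 and 2 demand of $S$. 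The ideal $I = p^k R$ is principal, so we take $\alpha = p^k$.

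For the reverse inclusion, I would start from an arbitrary $S/\alpha S$ as in Case 1 or 2 and apply Theorem~\ref{elementary theory} to obtain an ultrafilter $D$, primes $p_i$ and positive integers $h_i$ with
\[
S/\alpha S \;\equiv\; \prod\nolimits_D \mathbb Z / p_i^{h_i}\mathbb Z.
\]
It then suffices to identify this ultraproduct with $\mathcal M/p^k\mathcal M$, where $\mathcal M := \prod_D \mathbb Z$, $p := [p_i]_D$ and $k := [h_i]_D$. The natural candidate is the map $[a_i + p_i^{h_i}\mathbb Z]_D \longmapsto [a_i]_D + p^k\mathcal M$, and I would verify by a routine Łos computation that it is a well-defined ring isomorphism. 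The crucial identity is $p^k = [p_i^{h_i}]_D$ in $\mathcal M$, which follows because exponentiation is a definable function in Peano Arithmetic and Łos's theorem transfers its interpretation to the ultrapower.

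The proof has no genuine obstacles; everything is bookkeeping on results already established. The one point to be careful about is the interaction between the (nonstandard) element $k \in \mathcal M$ and the componentwise exponents $h_i$: one must state explicitly that $p^k \in \mathcal M$ is computed via the PA-definable exponentiation, so that Łos applies and gives the identification $p^k\mathcal M = [p_i^{h_i}\mathbb Z]_D$ needed for well-definedness and injectivity of the isomorphism. Once this is observed, both inclusions are immediate, and the corollary follows.
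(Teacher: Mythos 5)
Your proposal is correct and follows exactly the paper's route: the paper proves this corollary in one line, citing Theorem~\ref{henselianR} for one inclusion and Theorem~\ref{elementary theory} for the other, which is precisely your two-step decomposition. The details you supply (Ax--Kochen--Ershov to place $R$ in Case 1 or 2, and the \L os identification of $\prod_D \mathbb Z/p_i^{h_i}\mathbb Z$ with $\mathcal M/p^k\mathcal M$ for $\mathcal M$ an ultrapower of $\mathbb Z$) are exactly the bookkeeping the paper leaves implicit or has already carried out inside the proofs of those two theorems.
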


\noindent
{\it Proof:} By Theorem \ref{henselianR} and Theorem \ref{elementary theory}. 
\hfill $\Box$

Ax's decidability results  for the class of all $\mathbb Q_p$ gives (via interpretability) decidability of the class of all $S/\alpha S$, and so of the class of all $\mathcal M\big/p^{k}\mathcal M$.

We summarize what we have proved so far.  

\begin{thm}
\begin{enumerate}
\item 
$\mathcal M\big/p^{k}\mathcal M$ are pseudofinite (or finite).

\item
The theory of the $\mathcal M\big/p^{k}\mathcal M$ is the theory of the $\mathbb Z\big/p^h\mathbb Z$.

\item
The theory of the $\mathcal M\big/p^{k}\mathcal M$ is decidable.

\end{enumerate}
\end{thm}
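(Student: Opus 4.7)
The plan is to derive all three items as direct consequences of Theorem \ref{elementary theory} (together with the Corollary immediately preceding), supplemented by Ax's decidability theorem from \cite{ax4}.

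For (1), I would apply Theorem \ref{elementary theory} via the ring $R$ produced by Theorem \ref{henselianR}: every $\mathcal{M}/p^{k}\mathcal{M}$ is elementarily equivalent to an ultraproduct of the finite rings $\mathbb{Z}/p^{h}\mathbb{Z}$. This is precisely the definition of pseudofiniteness. When both $p$ and $k$ are standard, of course, $\mathcal{M}/p^{k}\mathcal{M}\cong \mathbb{Z}/p^{k}\mathbb{Z}$ is genuinely finite, so the alternative in the statement is accounted for as well.

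For (2), I would note that $\mathbb{Z}\models PA$, so each $\mathbb{Z}/p^{h}\mathbb{Z}$ is already of the form $\mathcal{M}/p^{k}\mathcal{M}$, which gives the inclusion $\mathrm{Th}(\{\mathcal{M}/p^{k}\mathcal{M}\}) \subseteq \mathrm{Th}(\{\mathbb{Z}/p^{h}\mathbb{Z}\})$. For the reverse inclusion, any sentence $\varphi$ true in every $\mathbb{Z}/p^{h}\mathbb{Z}$ is, by {\L}o\'s's theorem, true in every ultraproduct of these rings, and hence in every $\mathcal{M}/p^{k}\mathcal{M}$ by (1).

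For (3), Ax's theorem in \cite{ax4} asserts decidability of the theory of the class of all $\mathbb{Q}_{p}$; since $\mathbb{Z}_{p}$ is uniformly (existentially) definable in $\mathbb{Q}_{p}$ and $\mathbb{Z}/p^{h}\mathbb{Z}\cong \mathbb{Z}_{p}/p^{h}\mathbb{Z}_{p}$ is the quotient by a uniformly defined principal ideal, the theory of the class $\{\mathbb{Z}/p^{h}\mathbb{Z}\}$ inherits decidability. By (2), this is the same theory as $\mathrm{Th}(\{\mathcal{M}/p^{k}\mathcal{M}\})$, so the latter is decidable as well. The only delicate point is the interpretability step in (3), requiring uniformity of the definition in the prime $p$ and the exponent $h$; both are handled by the uniform $\exists\forall$-definition of $\mathbb{Z}_{p}$ recalled in the introduction (cf.\ \cite{CDLM}) and the obvious uniform definition of $p^{h}\mathbb{Z}_{p}$. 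Everything else is a reshuffling of the inputs already assembled.
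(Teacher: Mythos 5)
Your proof is correct and follows essentially the same route as the paper: items (1) and (2) are read off from Theorem~\ref{elementary theory} (pseudofiniteness being elementary equivalence to an ultraproduct of the finite rings $\mathbb Z/p^h\mathbb Z$, plus the two-sided inclusion of theories via {\L}o\'s), and item (3) comes from Ax's decidability of the class of all $\mathbb Q_p$ combined with the uniform interpretability of $\mathbb Z_p$ and its quotients. The paper's proof is merely terser, citing Theorem~\ref{elementary theory} and \cite{ax4} directly; the details you supply are exactly what was intended.
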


\noindent
{\it Proof:}  1) and 2) follow from Theorem \ref{elementary theory}. The theory of $\mathbb Z\big/p^h\mathbb Z$ for a fixed prime $p$ and $h>0$ is decidable by \cite{ax4}.

\section{Axioms, definitions and model-completeness}
Let $R=\mathcal M/p^k\mathcal M$. 

\medskip

\label{k=1}
If $k=1$ then $R$ is a model of the theory of finite prime fields (see \cite{MacResField}, \cite{mints}).
\medskip
The theory $PrimeFin$ of finite prime fields (and thus the theory of the $R$'s)  has uniform quantifier elimination in the definitional extension of ring theory got by adding primitives $Sol_n(y_0,\ldots, y_n)$ expressing $\exists t(y_0+\ldots+ y_nt^n=0)$, see \cite{AdlerKiefe}, and \cite{mints} for rings arising from models of $PA$. Models $R_1$ and $R_2$ are elementarily equivalent if and only if they have the same characteristic and agree on all $Sol_n(\ell_0,\ldots, \ell_n)$ for $\ell_0,\ldots, \ell_n\in \mathbb Z$, see \cite{ax4}.

Some $Th(R)$ are model-complete, some not. The model-complete ones are those where $R$ is either $\mathbb F_p$  or  characteristic $0$, and for each $m$ the unique extension of dimension $m$ is got by adjoining an algebraic number, see \cite{AJmodelcomple}.
One gets an abundance of model-complete examples from Jarden's result in \cite{Jarden} that $ \{ \sigma \in Gal(\mathbb Q^{alg}): Fix (\sigma) \mbox{ is pseudofinite} \}$ has measure $1$. An example of a non  model-complete $R$ is one where $\mathbb Q^{alg}\subseteq R$ (see \cite{ax4}).

\bigskip

In the case of  $k>1$ we need to elaborate the discussion in Section 4. There we showed that $\mathcal M/p^k\mathcal M$ is isomorphic to some $S/\alpha S$ where $\alpha\in S$, $S$ is an unramified Henselian domain, with a value group a $\mathbb Z$-group, residue field  a model of the theory of finite prime fields. 
Conversely, any such $S/\alpha S$ is elementarily equivalent to some $\mathcal M/p^k\mathcal M$ for some $\mathcal M$ model of $PA$. 

So we have to investigate axioms, elementary invariants and model-completeness for all $R$ of the form $S/\alpha S$, where $S$ and $\alpha$ are as before. Each is valued in a Presburger TOAG. Moreover, the TOAG is interpretable (in $\mathcal L_{rings}$) in $R$, by taking the underlying set $\Gamma$  of the TOAG,  
to the set of principal ideals of $R$, linearly ordered by reverse inclusion, so $(1)$ is the least element and $(0)$ is the top element (obviously, we get a linear order since $S$ is a valuation domain).  We get a $\oplus$ on $\Gamma$ via $(\alpha) \oplus (\beta)=(\alpha \beta)$, making $\Gamma$ into a TOAG with  $(1)$ as $0$-element, and $(0)$ as $\infty$-element.  The {\it valuation} maps $\alpha$ to $(\alpha)$. In this way we get a Presburger TOAG. This is a crucial axiom about $R$. The $1$ of the Presburger TOAG is $(p)$ (the maximal ideal). In \cite{mints} we show that the elementary type of the TOAG is given by the {\it Presburger type} of the penultimate element of $\Gamma$ (in our case $(k-1)$). Moreover, any Presburger TOAG is model-complete. 

The maximal ideal $\mu$ of $R$ is of course definable as the set of nonunits and the residue field  of $R$ is interpretable. Naturally, we seek ``truncations'' of the Ax-Kochen-Ershov theorem. On the basis of what we have done above we get the following two basic theorems. 


\begin{thm}
The theory of the class of $\mathcal M/p^k\mathcal M$ ($p$ prime and $k\geq1$) is axiomatized by the following conditions 
\begin{enumerate}
\item
Henselian local ring, valued in a Presburger TOAG, with residue field a model of the theory of {\it finite prime} fields;

\item
If  the characteristic of the residue field is a prime $p$ then the valuation of $p$ is the least positive element of the Presburger TOAG.

\end{enumerate}
\end{thm}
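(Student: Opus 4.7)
The theorem is a bi-directional claim: every $R=\mathcal M/p^k\mathcal M$ satisfies (1) and (2), and every $R$ satisfying (1) and (2) is elementarily equivalent to some $\mathcal M/p^k\mathcal M$. The forward direction has essentially been assembled earlier. Theorem \ref{henselian} supplies the Henselian local structure. The truncated valuation of Section 3 takes values in $[0,k]\subset\mathcal M$, which is a Presburger TOAG in the sense of Theorem \ref{TOAGPRES}, the axioms being verified because $[0,k]$ sits inside the Presburger group $\mathcal M$. The residue field is either $\mathbb F_p$ or a characteristic-$0$ pseudofinite field, hence a model of $PrimeFin$. Finally, $v_p(p)=1$ holds in $\mathcal M$ for standard $p$, while for nonstandard $p$ the residue characteristic is $0$ and condition (2) is vacuous.

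For the converse, let $R$ satisfy (1) and (2). By the Corollary to Theorem \ref{elementary theory} it suffices to exhibit $S$ as in Case 1 or Case 2 of Section 4 and a nonzero non-unit $\alpha\in S$ with $R\equiv S/\alpha S$. Let $[0,\tau]$ be the Presburger TOAG interpreted in $R$ as principal ideals linearly ordered by reverse inclusion, with operation induced by multiplication, and let $F$ be the residue field. I would first embed $[0,\tau]$ as an initial segment of a $\mathbb Z$-group $\Gamma$ (by Theorem \ref{TOAGPRES}), then construct a Henselian valued field $(K,v)$ of characteristic $0$ with value group $\Gamma$, residue field $F$, and, when $\operatorname{char} F$ is a prime $p$, unramified. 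For $F=\mathbb F_p$ one takes a suitable elementary extension of $\mathbb Z_p$ realizing value group $\Gamma$, landing in Case 1; for $F$ of characteristic $0$ one takes a Hahn-series construction over $F$ with exponents in $\Gamma$ (or its Henselization), landing in Case 2. Let $S$ be the valuation ring of $K$ and pick $\alpha\in S$ with $v(\alpha)=\tau$.

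To conclude $R\equiv S/\alpha S$, I would run a \emph{truncated Ax-Kochen-Ershov} argument. Both rings are Henselian local, interpret Presburger TOAGs in which the penultimate element $\tau\dotminus 1$ has the same Presburger type (determined by the type of $\tau\dotminus 1$ in $R$), and have elementarily equivalent residue fields of matching characteristic (both modelling $PrimeFin$). Passing to sufficiently saturated elementary extensions $R^{\ast}$ and $(S/\alpha S)^{\ast}$ I would build an isomorphism by back-and-forth in three layers: (a) extend on the residue field using the quantifier elimination for $PrimeFin$ in the Adler–Kiefe language; (b) extend on the TOAG using the model-completeness of $PresTOAG$ proved earlier in this paper; (c) lift coherent residue/value-group choices back into the ring using Hensel's lemma, which holds by (1). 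Applying the Corollary then produces the desired $\mathcal M/p^k\mathcal M$.

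\textbf{Main obstacle.} The delicate step is the truncation compatibility in (c). Classical AKE lifts coherent choices to an honest Henselian valuation ring, whereas here the lifts must descend to the quotient $S/\alpha S$ and to $R$, which collapse everything of valuation $\geq\tau$. One has to verify that the back-and-forth never demands a lift whose Hensel-approximation sequence overflows the truncation $\tau$, and that the interpreted TOAG and interpreted residue field in the quotient genuinely control its elementary type. I expect to reduce this to the standard AKE applied to the expanded structure $(K,v,\alpha)$ with $\alpha$ a distinguished non-zero non-unit, whose elementary theory is determined by residue field, value group, and Presburger type of $v(\alpha)=\tau$, and then to observe that $R$ is interpretable in, hence has its elementary type determined by, precisely these invariants.
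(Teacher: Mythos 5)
Your proposal is correct and follows essentially the same route as the paper, which gives no separate proof of this theorem but derives it exactly as you do: the forward direction by assembling Theorem \ref{henselian}, the truncated valuation of Section 3, and the residue-field analysis, and the converse by realizing a model of the axioms as (something elementarily equivalent to) a quotient $S/\alpha S$ and then invoking Theorem \ref{elementary theory} and its Corollary, with the elementary equivalence $R\equiv S/\alpha S$ handled by the truncated Denef--Pas/Ax--Kochen--Ershov analysis of Theorem \ref{elimination}. Your "main obstacle" paragraph correctly isolates the same reduction the paper uses, namely passing to $(K,v,\alpha)$ with $\alpha$ a distinguished non-unit and observing that the quotient's theory is controlled by the residue field and the Presburger type of $v(\alpha)$.
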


\begin{thm}
The elementary theory of an individual $\mathcal M/p^k\mathcal M$ is uniquely determined by the Presburger type of the penultimate element of the TOAG, and by the elementary theory of the residue field. Given a Presburger type and a residue field, there is an $\mathcal M/p^k\mathcal M$ with the corresponding invariants. 
\end{thm}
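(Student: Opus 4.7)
\medskip

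\noindent\textbf{Proof plan.} By the Corollary following Theorem~\ref{elementary theory}, each $\mathcal M/p^k\mathcal M$ is elementarily equivalent to some $S/\alpha S$ as in Cases 1 and 2 of Section 4, and conversely. So the plan is to prove the stronger statement that the elementary theory of $S/\alpha S$ is determined by the Presburger type of $v(\alpha)-1$ (the penultimate element of its value TOAG $[0,v(\alpha)]$) together with the elementary theory of the residue field. The key reduction is that $S/\alpha S$ is uniformly interpretable in the two-sorted structure $(S,\alpha)$ consisting of the Henselian valued ring $S$ together with the constant $\alpha$: an element of $S/\alpha S$ is coded by an element of $S$, the ring operations descend, and the induced truncated valuation on $S/\alpha S$ with values in $[0,v(\alpha)]$ is given by $\bar s\mapsto \min(v(s),v(\alpha))$, which is first-order with parameter $\alpha$. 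Hence it suffices to show that $(S_1,\alpha_1)\equiv(S_2,\alpha_2)$ whenever the two invariants agree.

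\medskip

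\noindent\textbf{Uniqueness.} Here I would invoke the Ax--Kochen--Ershov theorem in its relative form, with $\alpha$ added as a constant to the language of valued rings. In both Case 1 (mixed characteristic, unramified, with $v(p)=1$) and Case 2 (equicharacteristic $0$), classical AKE asserts that the theory of a Henselian valuation ring in this class is determined by the theory of its residue field and the theory of its value group. Adding the constant $\alpha$ (a nonunit, hence lying in the maximal ideal, so contributing $0$ to the residue field) preserves this relative completeness statement with $(\Gamma,v(\alpha))$ replacing $\Gamma$: $(S_1,\alpha_1)\equiv(S_2,\alpha_2)$ iff the residue fields are elementarily equivalent and $(\Gamma_1,v(\alpha_1))\equiv(\Gamma_2,v(\alpha_2))$ as $\mathbb Z$-groups with a distinguished element. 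By Presburger quantifier elimination, the latter is equivalent to $v(\alpha_1)$ and $v(\alpha_2)$ having the same Presburger type, which in turn is equivalent to $v(\alpha_i)-1$ having the same Presburger type. Interpretability then transfers elementary equivalence from $(S_i,\alpha_i)$ to $S_i/\alpha_i S_i$, giving uniqueness.

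\medskip

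\noindent\textbf{Existence and the main obstacle.} For the realizability statement, given any completion $T$ of $PrimeFin$ and any Presburger $1$-type $t$ (as an element of a $\mathbb Z$-group), I would build a Henselian valuation domain $S$ of the required type (unramified Case 1 or equicharacteristic Case 2, according to whether $T$ has prime or $0$ characteristic) with residue field a model of $T$ and value group a $\mathbb Z$-group $\Gamma$ containing some $\gamma$ realizing $t$: by AKE such $S$ exists (e.g.\ as a suitable Hahn/power series ring in Case 2, or as an ultraproduct of $\mathbb Z_p$'s in Case 1). Picking $\alpha\in S$ with $v(\alpha)=\gamma$, the quotient $S/\alpha S$ has the prescribed invariants; by Theorem~\ref{elementary theory} it is elementarily equivalent to some $\mathcal M/p^k\mathcal M$. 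The main obstacle is formulating and verifying the relative AKE with the constant $\alpha$: one must check that the back-and-forth or embedding arguments underlying standard AKE go through when $\alpha$ is fixed, and in particular that the elementary type of $v(\alpha)$ in $\Gamma$ is the only further invariant contributed by $\alpha$; this works precisely because $\alpha$ is mapped to $0$ in the residue field, so no new residue-field information is introduced, and the value group contribution reduces cleanly to Presburger quantifier elimination.
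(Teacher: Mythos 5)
Your proposal follows essentially the same route the paper takes: reduce via Theorem~\ref{henselianR} and Theorem~\ref{elementary theory} to rings $S/\alpha S$, interpret the quotient and its truncated valuation in $(S,\alpha)$, and invoke relative Ax--Kochen--Ershov together with Presburger quantifier elimination so that the only surviving invariants are the theory of the residue field and the Presburger type of $v(\alpha)$ (equivalently of the penultimate element $v(\alpha)-1$). One step is misstated, though: the biconditional ``$(S_1,\alpha_1)\equiv(S_2,\alpha_2)$ iff $k_1\equiv k_2$ and $(\Gamma_1,v(\alpha_1))\equiv(\Gamma_2,v(\alpha_2))$'' fails in the direction you need, because the constant $\alpha$ carries residue-field information beyond $\bar\alpha=0$, namely its angular component. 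For instance $(\mathbb Z_p,p^2)\not\equiv(\mathbb Z_p,up^2)$ for $u$ a nonsquare unit (one constant is a square, the other is not), although both invariants agree. This does not derail the argument, since $S/\alpha S$ depends only on the ideal $(\alpha)$: replace $\alpha$ by a unit multiple so that $ac(\alpha)=1$ before applying the relative AKE/Denef--Pas machinery, and then the value $v(\alpha)$ really is the only datum contributed by the constant. With that normalization made explicit, your uniqueness and existence arguments coincide with the paper's.
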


\begin{osserva}
\begin{enumerate}
\item
In the above we work with the ring language, since the TOAG condition comes from a condition on principal ideals.
\item
Obviously there are decidability results parallel to the above (of the form, for example, if the residue field is decidable, and the Presburger type of the penultimate element of the TOAG is computable then $\mathcal M/p^k\mathcal M$ is decidable). 

\end{enumerate}
\end{osserva}

\subsection{Quantifier elimination}
The analysis of formulas in the $\mathcal M/p^k\mathcal M$, rather than the mere sentences of the preceding subsection, is quite tricky. It involves a Denef-Pas analysis as in \cite{mints} for sentences, but with some complications, and we give only a sketch of the argument.

By the preceding we are dealing (as far as definability is concerned) with rings $S/\alpha S$ with $S$ a Henselian valuation domain with Presburger value group, with residue field $ k$ a model of the theory of finite prime field, and unramified. We note the substantial result in \cite{CDLM} that $S$ is uniformly $\exists \forall$-definable in the language of rings in its field of fractions $K$. 

It is convenient to fix in $S$ an element $t$ with $v(t)=1$, i.e. the least positive element of the value group. We take $t=p$ if  $k$ has characteristic $p$, and $t$ arbitrary otherwise (note then that the type of $t$ has many possibilities). We can assume $K$ is $\aleph_1$-saturated and so  has an angular component map $ac: K \rightarrow k$. This will not be definable in general unless we have some saturation. See \cite{Pas}, and \cite{mints} for the $PA$ context for the details. Note that our $\mathcal M/p^k\mathcal M$ are recursively saturated, and so if $\mathcal M $ is countable we have an $ac$. 

We now work in the 3-sorted formalism $\mathcal L_{Denef-Pas}$, with sorts of $K$, $k$, and $\Gamma$ (field, residue field, and value group with added $\infty$). We have the usual field formalism on each of $K$, $k$, and on $\Gamma$ the usual $+, - , <, \infty$. In addition we have two trans-sort  primitives, $ac$ and $v$, the former  from $K$ to $k$, and the latter from $K$ to $\Gamma \cup \{ \infty \}$. $K$ has characteristic $0$, $K$ has constant $t$ as above with $t=p$ if $k$ has characteristic $p$, and $v(t)=1$.

Our purpose is to analyze the structure of the sets $$\{   (y_1, \ldots ,y_n) \in (S/\alpha S)^n:   S/\alpha S\models \psi (y_1, \ldots ,y_n)  \}$$
for a ring formula $\psi$.

For convenience in applying Denef-Pas, we work in $K$ rather than in $S$, but we exploit the uniform definability of $S$ in $K$. So, we consider variables $x_1, \ldots ,x_n,\alpha$ and the formula expressing (in $K$)  $x_1, \ldots ,x_n,\alpha$ are in $S$, and 
\begin{equation}
\label{Smodels}
S/\alpha S\models \psi (x_1+\alpha S, \ldots ,x_n+\alpha S). 
\end{equation}

   This is a first order condition. Thus by Denef-Pas it is uniformly, except for finitely many $p$ (characteristic of $k$), equivalent to a formula $\Theta (x_1, \ldots ,x_n,\alpha)$ in $\mathcal L_{DP}$ with no bounded $K$ variables (but do not forget the constant $t$!). The basic formulas out of which $\Theta$ is built are:

algebraic equations in $x_1+\alpha S, \ldots ,x_n+\alpha  S, t$ over $K$;

residue field formulas in $ac(\tau_{\ell}) $ for various polynomials $\tau_{\ell}$ over $\mathbb Z$ in $x_1+\alpha S, \ldots ,x_n+\alpha S, t$.

Presburger formulas in $v(\mu_m)$ for various polynomials $\mu_m$ in $x_1+\alpha S, \ldots ,x_n+\alpha S, t$.

\smallskip

In fact, $\Theta$ can be taken as a Boolean combination of these basic formulas, as can be seen by inspection of the proof of Denef-Pas. 

For the finitely many exceptional standard primes $p_1,\ldots ,p_n$ (which can be found effectively from $\psi$, by \cite{ax4}), $S$ is elementarily equivalent to one of $\mathbb Z_{p_1}, \ldots , \mathbb Z_{p_n}$ (the analogous easier argument for sentences is sketched in \cite{mints}). By using Macintyre's quantifier elimination for each of $\mathbb Z_{p_1}, \ldots , \mathbb Z_{p_n}$, and changing the $ac$ if need be, one easily gets the power conditions $P_{\ell}(x)$ to have the required Denef-Pas form. By using that  $p=0$ in $k$ captures the characteristic $p$ condition, one can combine these finitely many analyses with the one that works except for $p_1,\ldots ,p_n$ to get the Denef-Pas result for all $\psi$, giving a new  $\Theta$ that works always, independent of $p$.

Now we can do further elimination semplification in the other sorts. For the Presburger sort we simply have the classical elimination down to order and congruence conditions, provided we have a constant  for the least positive element (and above we have stipulated that $v(t)=1=$ least positive element). So Presburger quantifiers get eliminated. This leaves the issue  of quantification over $k$. Recall that $k$ ranges over models of the theory of finite prime fields. By \cite{ax4} and \cite{AdlerKiefe} one easily sees that (uniformly) $k$ has quantifier elimination down to the solvability predicates $Sol_n$. 

Now recall that we seek elimination results in the $S/\alpha S$, where the above takes place in $K$. However, $k$ depends only on $S$ and not on $\alpha$, and the Presburger conditions have the same value in $S/\alpha S$ as in $S$, for $x_1,\ldots ,x_n$ proper dividing $\alpha$, so in fact we have proved (recall our starting point (\ref{Smodels})):

\begin{thm}
\label{elimination}
Assume the previous conditions on $S$, and adjoin constant $t$ with $v(t)=1$, and $t=p$ if $k$ has characteristic $p$.  Then uniformly in $S$ for any $\psi (x_1,\ldots ,x_n)$ in $\mathcal L_{ring,t}$, the language  of rings with $t$,  there is another such formula $\psi^+  (x_1,\ldots ,x_n,y)$ such that if  $S$ is $\aleph_1$-saturated $S$ has an angular component $ac$ such that for all $\alpha\in S$, $\alpha \not=0$ and $\beta_1, \ldots , \beta_n\in S$  properly dividing $\alpha$ 
$$S/\alpha S \models \psi (\beta_1+\alpha S,\ldots ,\beta_n+\alpha S) \mbox{ \hspace{.1in} } \Leftrightarrow  \mbox{ \hspace{.1in} } S \models \psi^+ (\beta_1,\ldots ,\beta_n,\alpha)
$$
where $\psi^+  (x_1,\ldots ,x_n,y)$ is a Boolean combination of three kinds of sorted formulas (where now the sorts are local ring, residue ring, TOAG):
\begin{enumerate}
\item
polynomial equations over $\mathbb Z[t]$ in $\beta_1+\alpha S,\ldots ,\beta_n+\alpha S$
\item
solvability conditions over $\mathbb Z$ in $\beta_1+\alpha S,\ldots ,\beta_n+\alpha S$ and $ac(\beta_1), \ldots , ac(\beta_n)$, $ac(t)=1$ (i.e. using the predicates $Sol_n$)
\item
Presburger conditions over monomials in $v(\beta_1), \ldots , v(\beta_n)$ in the Presburger TOAG $[ 0, v(t/\alpha)]$. 
\end{enumerate}
\end{thm}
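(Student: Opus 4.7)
The plan is to carry out a Denef--Pas style quantifier elimination in the field of fractions $K$ of $S$ and then transfer the result to the quotient $S/\alpha S$. First I would use the uniform $\exists\forall$-definability of $S$ in $K$ from \cite{CDLM} to rewrite the condition ``$x_1,\ldots,x_n,\alpha$ lie in $S$ and $S/\alpha S \models \psi(x_1+\alpha S,\ldots,x_n+\alpha S)$'' as a single first-order formula over $K$ in the ring language with constant $t$. This lifting is crucial because Denef--Pas is formulated for valued fields, not for quotients of valuation rings.

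Next I would apply Denef--Pas quantifier elimination in the 3-sorted language $\mathcal{L}_{DP}$ consisting of the field sort $K$, residue field sort $k$, and value group sort $\Gamma$, with the $ac$ and $v$ maps. This yields, for all but finitely many standard residue characteristics $p$, an equivalent formula $\Theta$ with no bounded $K$-quantifiers, built as a Boolean combination of polynomial equations over $K$, residue field formulas in $ac$ of polynomials in the $x_i, t$, and Presburger formulas in $v$ of such polynomials. The exceptional primes $p_1,\ldots,p_n$, which can be extracted effectively from $\psi$ by \cite{ax4}, would be handled separately: for each, $S$ is elementarily equivalent to the corresponding $\mathbb{Z}_{p_i}$, and Macintyre's quantifier elimination for $\mathbb{Z}_{p_i}$ (after a possible adjustment of $ac$) gives the Denef--Pas form. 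Using $p=0$ in $k$ as a formula expressing that the residue characteristic is $p$, the finitely many exceptional analyses merge with the generic one to produce a uniform $\Theta$ valid for all admissible $S$.

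Then I would simplify $\Theta$ further in the non-field sorts. In the Presburger sort, classical elimination reduces everything to order and congruence conditions on the $v(\mu_m)$ in terms of $v(t)=1$, the least positive element of $\Gamma$, so all Presburger quantifiers disappear. In the residue field sort, since $k$ is a model of $PrimeFin$, the uniform quantifier elimination of \cite{ax4} and \cite{AdlerKiefe} down to the solvability predicates $Sol_n$ eliminates all $k$-quantifiers, leaving Boolean combinations of $Sol_n$ statements about $ac$-values of polynomial expressions in the $\beta_i$ and $t$ (with $ac(t)=1$).

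Finally I would translate back to $S/\alpha S$. The key observations are: the residue field $k$ is determined by $S$ alone, so $Sol_n$-conditions transfer directly; the Presburger conditions on $v(\beta_i)$, evaluated in the truncated value group $[0, v(t/\alpha)]$ of $S/\alpha S$, agree with their evaluation in $\Gamma$ when the $\beta_i$ properly divide $\alpha$; and polynomial identities in $K$ among elements of $S$ pass to the quotient. Collecting everything produces the desired $\psi^+$ as a Boolean combination of the three listed kinds of sorted formulas. The main obstacle, and where care is most needed, is orchestrating Denef--Pas uniformly across all admissible $S$ while absorbing the finitely many exceptional primes, and ensuring the angular component $ac$ is actually available as a definable map: this requires $\aleph_1$-saturation of $K$, which is secured in our setting by the recursive saturation of countable $\mathcal{M}/p^k\mathcal{M}$ together with standard resplendency arguments.
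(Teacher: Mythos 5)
Your proposal is correct and follows essentially the same route as the paper: lift the condition on $S/\alpha S$ into $K$ via the uniform definability of $S$ from \cite{CDLM}, apply Denef--Pas in the three-sorted formalism with constant $t$, absorb the finitely many exceptional residue characteristics via Macintyre's quantifier elimination for $\mathbb{Z}_{p_i}$ and the condition ``$p=0$ in $k$'', perform the further eliminations in the value-group and residue-field sorts down to Presburger congruences and $Sol_n$ predicates respectively, and then transfer back to the truncation. The one minor caveat is that your closing remark about recursive saturation of countable $\mathcal{M}/p^k\mathcal{M}$ and resplendency is an aside the paper makes elsewhere, whereas the theorem itself simply hypothesizes $\aleph_1$-saturation of $S$ to secure the angular component; this does not affect the argument's validity.
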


\begin{proof}
This is sketched above, and is the obvious ``truncated" analogue of Denef-Pas.  Note that \cite{DDM} contains the background for 3.
\end{proof}

We do not attempt to go any deeper to minimize the role of the $ac$. This may be worthwhile, but it is not needed for our last topic below. 

\subsection{Elementary embeddings  and model-completeness}
 We work with rings $R\equiv S/\alpha S$, where $S$ is as above. The TOAG valuation is algebraically interpretable in $R$. From Section 4, the elementary type of $R$ is determined by the elementary type of the residue field, and the Presburger type of the penultimate element of the TOAG. For $R_1\equiv R_2$ as rings, and an embedding $f:R_1\rightarrow R_2$, we want to find out when $f$ is elementary. There is no loss of generality in analyzing the case when $f$ is a ring inclusion. It has no chance of being elementary if the least positive element of the TOAG of $R_1$ is not the least positive element of the TOAG of $R_2$. So we work in  $\mathcal L_{ring,t}$, the language of rings with $t$,  with corresponding assumptions on $R_1$ and $R_2$. Thus there is a natural inclusion of residue fields $k_1\rightarrow k_2$, provided each $R_i$ satisfies that $t$ generates the maximal ideal. This we now assume (recall that the corresponding  maximal ideals  $\mu_1$ and $\mu_2$ are both generated by $t$). Note one cost of adjoining $t$ is that our work has to take some account  of part of ``the type of $t$". We indicate, as we go along, what is involved.

Our purpose is to show that if the embedding $k_1\rightarrow k_2$ is elementary then so is $R_1\rightarrow R_2$. Our convention about $t$ ensures that the induced map on TOAGs is elementary. 

Now, neither $R_1$ nor $R_2$ need have an $ac$, as required for the Denef-Pas analysis, and we have to resort to ``tricks of the saturation trade" to reduce to the case when $R_2$ has an $ac$ which restricts to $R_1$. If there is any counterexample to our claim that if $k_1\rightarrow k_2$ is elementary then so is $R_1\rightarrow R_2$, we select such a counterexample (witnessed by a particular residue-field formula), and by a standard compactness/saturation argument produce a counterexample with stronger properties, namely those given in the next two paragraphs.

We are assuming $R_1\subseteq R_2$ (and in fact a local extension because of the $t$-convention), $R_1\cong S/\alpha S$, $R_1 \equiv R_2$ in the rings language, $S$ is $\aleph_1$-saturated, and $k_1\preceq  k_2$ (note that since $ac(t)$ can be chosen as $1$, we need only consider $k_1\preceq  k_2$ in the ring language), and some formula $W(x_1, \ldots ,x_n)$ witnessing that $R_1\not\preceq  R_2$ (in $\mathcal L_{ring,t}$), i.e. there are some $c_1,\ldots , c_n$ in $R_1$ so that $R_1\models W(c_1,\ldots , c_n)$ and $R_2\models \neg W(c_1,\ldots , c_n)$. 

Then we have to work (quite hard) to get an $ac$ on $R_2$ restricting to one on $R_1$. This involves modifying $R_2$ in general. First get $ac$ on $R_1$ using the $\aleph_1$-saturation of $R_1$, recall that $S$ is $\aleph_1$-saturated and by \cite{Cherlin} has a normalized cross-section and thus an $ac$ (appropriately normalized) which truncates to $R_1$. 

Now go from $R_2$ to an $|R_2|^+$-saturated elementary extension $\overline{R_2}$. This  by an obvious adaptation of Cherlin's argument in \cite{Cherlin} give an $\overline{ac}$ on $\overline{R_2}$ extending the $ac$ on $R_1$. Now use $R_1\rightarrow \overline{R_2}$, which still satisfies the original condition that $(R_1,R_2)$ did. By Theorem \ref{elimination} the truncated Denef-Pas version gives us $R_1\preceq \overline{R_2}$ since $k_1\preceq \overline{k_2}$, where  $\overline{k_2}$ is the residue field of $\overline{R_2}$, and all polynomial equations $$f(\eta_1,\dots, \eta_n, ac(\eta_1),\dots, ac(\eta_n))=0,$$ with $\eta_1,\dots, \eta_n\in R_1$ maintain their truth value between $R_1$ and $\overline{R_2}$ (trivially). But since $R_1\preceq \overline{R_2}$ and $R_2\preceq \overline{R_2}$  we must have $R_1\preceq  R_2$.

So we have now the analogue of the result due to Ziegler \cite{Ziegler} in generality.

\begin{thm}
Assume $R_1\subseteq R_2$ with the $t$ condition to guarantee $k_1\subseteq k_2$. Then $R_1\preceq R_2$ if and only if $k_1\preceq k_2$.
\end{thm}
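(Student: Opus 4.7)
The easy direction is that $R_1 \preceq R_2$ forces $k_1 \preceq k_2$: the residue field $k_i$ is uniformly interpretable as $R_i/\mu_i$, where $\mu_i$ is the set of nonunits, and the $t$-condition ensures this interpretation commutes with the ring inclusion. The substance is the converse, and the plan is to reduce it to a direct application of the truncated Denef--Pas elimination (Theorem \ref{elimination}).

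Suppose, toward a contradiction, that $k_1 \preceq k_2$ but $R_1 \not\preceq R_2$, witnessed by some ring formula $W(\bar x)$ and tuple $\bar c$ from $R_1$. By a standard compactness/saturation argument, I first replace $R_1, R_2$ by elementary extensions so that I may write $R_1 \cong S/\alpha S$ with $S$ an $\aleph_1$-saturated Henselian valuation domain of the type furnished by Theorem \ref{henselianR}. By Cherlin's theorem $S$ admits a normalized cross-section, hence an angular component map, which truncates to an $ac$ on $R_1$, normalized so that $ac(t)=1$. Next pass to an $|R_2|^+$-saturated elementary extension $\overline{R_2}$ of $R_2$. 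An adaptation of Cherlin's construction, executed \emph{over} the already-chosen $ac$ on $R_1$, produces an angular component $\overline{ac}$ on $\overline{R_2}$ extending the $ac$ on $R_1$; this is the only truly delicate part of the proof.

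With both $R_1$ and $\overline{R_2}$ carrying compatible angular components, Theorem \ref{elimination} applies to each: every ring formula $\psi(\bar x)$ becomes equivalent, over $R_1$ and over $\overline{R_2}$ respectively, to a Boolean combination of (i) polynomial equations over $\mathbb Z[t]$, (ii) solvability conditions $Sol_n$ on angular components of polynomials, and (iii) Presburger conditions on valuations of monomials. For parameters in $R_1$, type (i) is preserved by the inclusion trivially; type (iii) is preserved because the TOAG inclusion induced by $R_1 \hookrightarrow \overline{R_2}$ is elementary (the penultimate-element Presburger invariant is fixed by $R_1 \equiv R_2$, and $PresTOAG$ is model-complete); and type (ii) is preserved because $k_1 \preceq k_2 \preceq \overline{k_2}$ and the angular components agree on $R_1$. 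Hence $R_1 \preceq \overline{R_2}$, and since $R_2 \preceq \overline{R_2}$ in the same ring language, we deduce $R_1 \preceq R_2$, contradicting $W$.

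The main obstacle, as already flagged, is the compatible extension of angular components: one must set up Cherlin's back-and-forth inside $\overline{R_2}/R_2$ so that the resulting cross-section restricts to the one already fixed on $R_1 \subseteq R_2 \subseteq \overline{R_2}$. This demands care about the normalization at $t$ (so that $ac(t)=1$ on both sides), about compatibility with the induced TOAG map, and about the fact that $R_2$ itself need not definably carry an $ac$---all the extra room lives in $\overline{R_2}$. Once this compatible $ac$ is in hand, the remainder is a direct plug-in to Theorem \ref{elimination}, exactly in the spirit of Ziegler's original embedding theorem for Henselian valued fields but truncated to the $S/\alpha S$ setting.
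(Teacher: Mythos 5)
Your proposal follows the paper's own argument essentially step for step: reduce by compactness/saturation to the case where $S$ is $\aleph_1$-saturated, use Cherlin's cross-section theorem to install an $ac$ on $R_1$ with $ac(t)=1$, pass to an $|R_2|^+$-saturated elementary extension $\overline{R_2}$ and extend the angular component compatibly, invoke the truncated Denef--Pas elimination (Theorem \ref{elimination}) to get $R_1\preceq\overline{R_2}$, and conclude $R_1\preceq R_2$ from $R_1\subseteq R_2\preceq\overline{R_2}$. You spell out somewhat more explicitly than the paper why each of the three sorted fragments is preserved (in particular flagging that model-completeness of $PresTOAG$ together with the $t$-convention handles the valuation sort), and you correctly identify the compatible extension of the angular component as the genuinely delicate step, but the decomposition and key ingredients are the same.
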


 Finally this gives us a model-completeness result.
 
 \begin{thm}
 The theory of $R$ in the $t$-formalism is model-complete if and only if the theory of the residue field $k$ is.  
 \end{thm}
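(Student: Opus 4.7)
The plan is to use the preceding Ziegler-style theorem as the pivot: for $R_1 \subseteq R_2$ models of $\mathrm{Th}(R)$ in the $t$-formalism, elementariness of the ring inclusion is equivalent to elementariness of the induced residue-field inclusion $k_1 \hookrightarrow k_2$. Since model-completeness is the assertion that every embedding of models of the theory is elementary, the biconditional to be proved amounts to a correspondence between ``arbitrary embeddings of models of $\mathrm{Th}(R)$'' and ``arbitrary embeddings of models of $\mathrm{Th}(k)$''.

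For the $(\Leftarrow)$ direction, assume $\mathrm{Th}(k)$ is model-complete and take any embedding $R_1 \subseteq R_2$ of models of $\mathrm{Th}(R)$. The $t$-convention that $t$ generates the maximal ideal on each side makes the embedding local, so it induces a ring embedding $k_1 \hookrightarrow k_2$ between models of $\mathrm{Th}(k)$. By hypothesis $k_1 \preceq k_2$, and the preceding theorem yields $R_1 \preceq R_2$.

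For the $(\Rightarrow)$ direction, assume $\mathrm{Th}(R)$ is model-complete and take an arbitrary embedding $k_1 \subseteq k_2$ of models of $\mathrm{Th}(k)$. I will ``lift'' it to an embedding $R_1 \subseteq R_2$ of models of $\mathrm{Th}(R)$ whose induced residue-field map recovers $k_1 \hookrightarrow k_2$. Model-completeness of $\mathrm{Th}(R)$ then gives $R_1 \preceq R_2$, and since the residue field is interpretable in $R$, this descends to $k_1 \preceq k_2$.

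The main obstacle is this lifting, which I would carry out using the existence machinery of Section~4. That classification says $\mathrm{Th}(R)$ is pinned down by $\mathrm{Th}(k)$ together with the Presburger type $\kappa$ of the penultimate element of the TOAG; fix $\kappa$ equal to the invariant of the given $R$. In the equicharacteristic-$0$ case, choose a $\mathbb{Z}$-group $\Gamma$ whose principal truncation at some $v(\alpha)$ has penultimate element of type $\kappa$, and take the Hahn-series valuation domains $S_i$ with coefficient field $k_i$ and value group $\Gamma$; these are Henselian, functorial in $k_i$, and contain a common $\alpha$ of the required valuation, so $R_1 := S_1/\alpha S_1 \hookrightarrow S_2/\alpha S_2 =: R_2$ is a ring embedding (injectivity being immediate from the valuation-domain property with common value group $\Gamma$ on both sides). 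In the mixed-characteristic case (residue characteristic $p$, $t=p$), use instead an unramified Henselian extension of $\mathbb{Z}_p$ with residue field $k_i$ and value group a $\mathbb{Z}$-group of type $\kappa$ --- for instance an appropriate ultrapower of the Witt vectors $W(k_i)$, which is again functorial in $k_i$ --- and quotient by $(p^{\kappa})$ on both sides. Functoriality preserves the given embedding $k_1 \hookrightarrow k_2$, and Section~4 guarantees that each $R_i$ is a model of $\mathrm{Th}(R)$, completing the lift.
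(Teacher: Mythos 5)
Your proof is correct. The $(\Leftarrow)$ direction is precisely the paper's implicit argument: given $R_1\subseteq R_2$ (local, with $t\mapsto t$), the induced $k_1\hookrightarrow k_2$ is elementary by hypothesis, and the preceding Ziegler-style theorem immediately gives $R_1\preceq R_2$. The paper offers no written proof at all (``Finally this gives us a model-completeness result''), and the $(\Rightarrow)$ direction is exactly where there is genuine content to supply: one needs to lift an arbitrary embedding $k_1\subseteq k_2$ of models of $\mathrm{Th}(k)$ to an embedding $R_1\subseteq R_2$ of models of $\mathrm{Th}(R)$ with the right TOAG invariant, so that model-completeness of $\mathrm{Th}(R)$ can be fed back through the Ziegler-style equivalence (or through interpretability of $k$ in $R$) to yield $k_1\preceq k_2$. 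Your Hahn-series construction does this cleanly in the equicharacteristic-$0$ case: functoriality of $k\mapsto k[[t^{\Gamma_{\geq 0}}]]$ in the coefficient field, a common uniformizer $t$ and a common $\alpha=t^{v(\alpha)}$ on both sides, injectivity of the quotient map from the shared value group, and matching of the two elementary invariants (residue-field theory and Presburger type of the penultimate element) so that $R_i\equiv R$ by the classification theorem of Section 5. One small economy you could note: in the mixed-characteristic case the residue field is $\mathbb{F}_p$ (finite), so any two models of $\mathrm{Th}(k)$ are isomorphic and $(\Rightarrow)$ is vacuous there; the Witt-vector/ultrapower apparatus is unnecessary. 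Overall your argument completes a step the paper leaves tacit, and it does so along lines the authors surely had in mind.
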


\section{Neostability and interpretability}

If $p$ is standard, the preceding shows that each $\mathcal M\big/p^{k}\mathcal M$ is interpretable in an ultrapower of $\mathbb Z_p$.  By \cite{delon} the ultrapower is $NIP$, and so does not interpret even $I\Delta_0$ (which has $IP$), a much weaker system than Peano Arithmetic see \cite{daqCheb}.  

When $p$ is nonstandard, $\mathcal M\big/p^{k}\mathcal M$ interprets $\mathcal M\big/p\mathcal M$ which has $IP$ (see \cite{Duret}), so $\mathcal M\big/p^{k}\mathcal M$ has $IP$. However, $\mathcal M\big/p^{k}\mathcal M$ lives in the $NTP_2$ enviroment of neostability \cite{ChernikovKaplanSimon}, since $\mathcal M\big/p^{k}\mathcal M$ is interpretable (by the preceding) in the ring of power series in $\mathcal M\big/p\mathcal M$ with value group a model of Presburger. By \cite{ChernikovKaplanSimon} this ring of power series has $NTP_2$.

However, any model $\mathcal M$ of $I\Delta_0$  has $TP_2$, as we see by the following construction. Let $a_{nm}$ be $p_n^m$ for $n,m$ positive standard integers, and $p_n$ a prime in $\mathcal M$. Consider the formula (of ring theory) $\varphi (x,y)$ saying that $y$ is a power of a prime $p$, and $v_p(x)=v_p(y)$. From \cite{daqCheb} this is given in $\mathcal M$ by a $\Delta_0$-formula. Now, 
\begin{enumerate}
\item 
the set $\{ \varphi (x,a_{nm}): m\in \mathbb N\}$ is inconsistent, for fixed $n$;
\item
for any $f:\omega \rightarrow \omega$ the set $\{ \varphi (x,a_{nf(n)}): n\in \mathbb N\}$ is consistent.
\end{enumerate}

1. This is clear, since the type forces $v_{p_n}(x)=m$, for all $m$.

2. One shows that for each $\overline{n}$, the set $\{ \varphi (x,a_{nf(n)}): n\leq \overline{n}\}$ is realized in $\mathcal M$, in fact by the element $b=p_0^{f(0)}\cdot \ldots \cdot p_{\overline{n}}^{f(\overline{n})}$. 

Since $TP_2$ is preserved by interpretation, we have 
\begin{thm}
For each $k\geq 1$ no model of $I\Delta_0$ is interpretable in any $\mathcal M\big/p^{k}\mathcal M$.
\end{thm}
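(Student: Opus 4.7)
\bigskip

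\noindent \textbf{Proof proposal.} The plan is to combine the three ingredients that have been assembled immediately before the theorem: the $NTP_2$-ness of $\mathcal M/p^k\mathcal M$ (in both the standard and nonstandard $p$ cases), the $TP_2$-ness of every model of $I\Delta_0$ (witnessed by the explicit array $a_{nm}=p_n^m$ and the $\Delta_0$-formula $\varphi(x,y)$), and the fact that $TP_2$ is preserved under interpretation. These three facts combine to give a quick contradiction argument.

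First I would recall, for the case $p$ standard, that the preceding sections showed $\mathcal M/p^k\mathcal M$ to be interpretable in an ultrapower of $\mathbb Z_p$. By Delon's theorem in \cite{delon}, such an ultrapower is $NIP$, hence $NTP_2$, and since $NTP_2$ is preserved under interpretation, $\mathcal M/p^k\mathcal M$ is $NTP_2$ in this case. For the case $p$ nonstandard, the argument just given in the excerpt shows $\mathcal M/p^k\mathcal M$ is interpretable in a ring of power series over $\mathcal M/p\mathcal M$ with value group a model of Presburger Arithmetic; by Chernikov--Kaplan--Simon \cite{ChernikovKaplanSimon} this power series ring is $NTP_2$, and again $NTP_2$ transfers to $\mathcal M/p^k\mathcal M$. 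Thus in both cases $\mathcal M/p^k\mathcal M$ is $NTP_2$.

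Now suppose, for contradiction, that some model $\mathcal N$ of $I\Delta_0$ were interpretable in $\mathcal M/p^k\mathcal M$. The explicit construction preceding the theorem shows that every model of $I\Delta_0$ has $TP_2$: take the $\Delta_0$-formula $\varphi(x,y)$ from \cite{daqCheb} and the array $a_{nm}=p_n^m$ indexed by standard pairs $(n,m)$; the inconsistency of rows is clear since $\varphi(x,a_{nm})$ forces $v_{p_n}(x)=m$, while the consistency of each vertical path $\{\varphi(x,a_{nf(n)}):n\in\mathbb N\}$ is witnessed, for each finite truncation $n\leq\overline n$, by $b=\prod_{n\leq\overline n}p_n^{f(n)}\in\mathcal M$. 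Hence $\mathcal N$ has $TP_2$.

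The main (and essentially only) step to check carefully is the preservation of $TP_2$ under interpretation: if $\mathcal N$ is interpretable in $\mathcal M/p^k\mathcal M$, one pulls back the $TP_2$-witnessing formula and array from $\mathcal N$ through the interpretation to obtain a $TP_2$-witnessing formula and array in $\mathcal M/p^k\mathcal M$, contradicting the $NTP_2$-ness established in the first paragraph. This contradiction finishes the proof. The only part I would expect to require a little care is a precise citation or quick verification that $TP_2$ is indeed interpretation-invariant (not merely definability-invariant within a fixed structure), but this is standard in the neostability literature and requires no new ideas.
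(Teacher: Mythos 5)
Your proof follows the paper's argument essentially step for step: you establish $NTP_2$ of $\mathcal M/p^k\mathcal M$ in both the standard case (via $NIP$ of ultrapowers of $\mathbb Z_p$) and the nonstandard case (via the $NTP_2$ power series ring), you use the same explicit array $a_{nm}=p_n^m$ and $\Delta_0$-formula $\varphi$ to show every model of $I\Delta_0$ has $TP_2$, and you conclude via preservation of $TP_2$ under interpretation. The only (cosmetic) difference is that you route the standard-$p$ case explicitly through $NTP_2$ rather than stopping at $NIP$/$IP$, but this is the same argument.
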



\medskip

 \noindent
{\bf Concluding remarks.} In the sequel to this paper we consider $\mathcal M\big/n\mathcal M$ for general $n$. The case when $n$ has only finitely many prime divisors is no harder that what we did above. But the general case is much harder.

\end{document}